\DeclarePairedDelimiterX\abs[1]\lvert\rvert{
	\ifblank{#1}{\:\cdot\:}{#1}
}
\DeclarePairedDelimiterX\norm[1]\lVert\rVert{ %norms
	\ifblank{#1}{\:\cdot\:}{#1}
}
\DeclarePairedDelimiterX{\inner}[2]{\langle}{\rangle}{ %inner products
	\ifblank{#1}{\:\cdot\:}{#1},\ifblank{#2}{\:\cdot\:}{#2}
}
\providecommand\given{}
\DeclarePairedDelimiterX\Set[1]{\lbrace}{\rbrace}{
	\renewcommand\given{\SetSymbol[\delimsize]}
	#1
}
\DeclarePairedDelimiterXPP\Prob[1]{\mathbb{P}}(){}{
	\renewcommand\given{\nonscript\:\delimsize\vert\nonscript\:
		\mathopen{}}
\DeclarePairedDelimiterXPP\Var[1]{\text{Var}}(){}{
	\renewcommand\given{\nonscript\:\delimsize\vert\nonscript\:
		\mathopen{}}
\DeclarePairedDelimiterXPP\Mean[1]{\mathbb{E}}[]{}{
	\renewcommand\given{\nonscript\:\delimsize\vert\nonscript\:
		\mathopen{}}
	#1}
\newlist{exercise}{enumerate}{3}
\setlist[exercise]{wide, labelwidth=!, labelindent=0pt, label=\textbf{(\alph*)}}
\theoremstyle{plain}
\newtheorem{theorem}{Theorem}
\newtheorem{corollary}[theorem]{Corollary}
\newtheorem{proposition}[theorem]{Proposition}
\newtheorem{lemma}[theorem]{Lemma}
\newtheorem{assumption}{Assumption}
\newtheorem{remark}[theorem]{Remark}
\newtheorem{example}[theorem]{Example}
\newcommand\N{\mathbb{N}}
\newcommand\Z{\mathbb{Z}}
\newcommand\R{\mathbb{R}}
\DeclarePairedDelimiterX\lrangle[1]\langle\rangle{
	\ifblank{#1}{\:\cdot\:}{#1}
}
\newcommand\idd{\,\mathrm{d}} %d in dx inside integrals
\newcommand\dd{\mathrm{d}} %d in d/dx
\newcommand\ff{\mathcal{F}}
\newcommand\B{\mathcal{B}} 
\newcommand\E{\mathcal{E}} 
\newcommand\D{\mathcal{D}}
\renewcommand\H{\mathcal{H}}
\newcommand\G{\mathcal{G}}
\newcommand{\p}{\mathbb P}
\newcommand{\e}{\mathbb E}
\newcommand{\stably}{\overset{st}{\rightarrow}}
\newcommand{\convd}{\overset{d}{\rightarrow}}
\newcommand{\cip}{\overset{\p}{\rightarrow}}
\newcommand{\ind}[1]{\mbox{\rm\large 1}_{{#1}}}
\begin{document}
\title{Local behavior of diffusions at the supremum}
\author[J.\ D.\ Thøstesen]{Jakob D.\ Thøstesen}
\begin{abstract}
This paper studies small-time behavior at the supremum of a diffusion process. For a solution to the SDE $\dd X_t=\mu(X_t)\dd t+\sigma(X_t)\dd W_t$ (where $W$ is a standard Brownian motion) we consider $(\epsilon^{-1/2}(X_{m^X+\epsilon t}-\overline{X}))_{t\in\R}$ as $\epsilon\downarrow0$, where $\overline{X}$ is the supremum of $X$ on the time interval $[0,1]$ and $m^X$ is the time of the supremum. It is shown that this process converges in law to a process $\hat{\xi}$, where $(\hat\xi_t)_{t\geq0}$ and $(\hat\xi_{-t})_{t\geq0}$ arise as independent Bessel-3 processes multiplied by $-\sigma(\overline{X})$. The proof is based on the fact that a continuous local martingale can be represented as a time-changed Brownian motion. This representation is also used to prove a limit theorem for zooming in on $X$ at a fixed time. As an application of the zooming-in result at the supremum we consider estimation of the supremum $\overline{X}$ based on observations at equidistant times.

\end{abstract}
\keywords{Diffusion process; functional limit theorem; small-time behavior; stable convergence; discretization error; Bessel process}
\subjclass[2020]{60J60, 60F17}
\maketitle

\section{Introduction}

Differentiation is a central concept in classical analysis and it is useful in many areas with one example being approximation. When dealing with stochastic processes, however, we rarely care about differentiation as the paths of many typical processes are differentiable at few (if any) points. This means that there is a need for a similar tool to handle the local behavior of such processes.

A differentiation-type concept for stochastic processes was introduced in \cite{AGP95} with the purpose of describing local behavior at the supremum of the Brownian motion. This concept was revisited in \cite{iva_zooming} where it was called zooming in. A stochastic process $X$ starting at zero is said to satisfy the \emph{zooming-in condition} if
\begin{equation}\label{eq:zooming-in_assumption}
	(a_\epsilon X_{\epsilon t})_{t\geq0}\overset{fdd}{\rightarrow}(\tilde X_t)_{t\geq0}\qquad\text{as }\epsilon\downarrow0,
\end{equation}
where $a_\epsilon$ is a scaling function and $\tilde X$ is a non-trivial stochastic process. It is clear that this is connected to differentiation (from the right) at time 0. Indeed, if $t\mapsto X_t$ is differentiable from the right at $0$ then the convergence holds with $a_\epsilon=\epsilon^{-1}$ and $\tilde X$ being a line.

The related concept of zooming out was studied in \cite{lamperti62}. While this sounds like quite a different framework it is in fact possible to transfer many of ideas to the zooming-in setting. This includes the study of the scaling function and the limit process. For more details see \cite{iva_zooming}.

The zooming-in condition has proven to be a very useful regularity assumption in e.g.\ \cite{bis_iva,iva_pod,discretization}. In those papers the zooming-in theory plays a large role in various discretization problems.

Naturally there is a big difference between zooming in at a fixed time and at a random time. With $X$ being a Lévy process satisfying the zooming-in assumption it was shown in \cite{iva_zooming} that one may also zoom in at the supremum of $X$ over the interval $[0,1]$. The scaling is again $a_\epsilon$ and the law of the limit process is related to $\tilde X$. This theory was used in \cite{iva_pod} to derive limit theorems related to estimation of the supremum of $X$ in a high-frequency setting, and it was used in \cite{bis_iva} to study threshold exceedance for Lévy processes.

This paper presents limit results for zooming in at a fixed time and at the supremum of a diffusion process. Estimation of the supremum is studied as an application of the limit theory. The approach is based on the fact that a continuous local martingale can be represented as a time-changed Brownian motion. For zooming in at the supremum this lets us build on an existing zooming-in result for the Brownian motion.

All relevant definitions and prerequisites are contained in \S\ref{sec:def_and_prereq}. In \S\ref{sec:main_results} the main results are presented. Generality of the results and possible extensions are covered in \S\ref{sec:further_comments}, and finally the most technical proofs are found in \S\ref{sec:proofs}.

\section{Definitions and prerequisites}\label{sec:def_and_prereq}

\subsection{The setup}\label{sec:setup}

Consider the SDE
\begin{equation}\label{eq:SDEx}
	\dd X_t=\mu(X_t)\dd t+\sigma(X_t)\dd W_t\qquad\text{and}\qquad X_0=x_0,
\end{equation}
where $W$ is a standard Brownian motion. We assume that there exists a weak solution $(X,W)$ to \eqref{eq:SDEx}, defined on a filtered probability space $(\Omega,\ff,(\ff_t),\p)$ such that $X$ is $(\ff_t)$-adapted and $W$ is an $(\ff_t)$-Brownian motion. We assume that $(\ff_t)$ satisfies the usual conditions. In this paper we will encounter several $(\ff_t)$-adapted processes which are almost surely continuous, $X$ and $W$ being the first examples. Since $(\ff_t)$ is complete we may and will assume that these processes are continuous for all $\omega\in\Omega$.

We need some regularity assumptions on $\mu$ and $\sigma$ which are stated in Assumption~\ref{as:mu_sigma} below. Here, the range of $X$ is the set of points $x\in\R$ for which $\p(X_t=x\text{ for some }t\in[0,\infty))>0$. Note that the positivity in assumption (ii) is quite standard and guarantees the presence of some amount of noise at any time. This is important for zooming in since the presence of a Brownian motion affects the scaling function. For example, if $X$ is a Brownian motion plus a linear drift then $a_\epsilon\sim c_1\epsilon^{-1/2}$ (for some $c_1>0$), and if $X$ is just a linear drift then $a_\epsilon\sim c_2\epsilon^{-1}$ (for some $c_2>0$), see \cite[Thm.~2]{iva_zooming}.

\begin{assumption}\label{as:mu_sigma}
	\normalfont
	\begin{enumerate}[label=(\roman*)]
		\item[]
		\item The function $\mu\colon\R\to\R$ is locally bounded.
		\item The function $\sigma\colon\R\to[0,\infty)$ is continuous and strictly positive on the range of $X$.
	\end{enumerate}
\end{assumption}

We let $\overline{X}:=\sup_{t\in[0,1]}X_t$ denote the supremum of $X$ over the unit interval, and we denote the time of the ultimate supremum by $m^X:=\sup\Set{t\in[0,1]\given X_t=\overline{X}}$. We then define the pre- and post-supremum processes, $\underleftarrow{X}$ and $\underrightarrow{X}$, by
\begin{equation*}
	\underleftarrow{X}_t:=\begin{cases}
		X_{m^X-t}-\overline{X}&\text{if }0\leq t< m^X, \\
		\dagger&\text{if }t\geq m^X,
	\end{cases}
	\qquad\text{and}\qquad
	\underrightarrow{X}_t:=\begin{cases}
		X_{m^X+t}-\overline{X}&\text{if }0\leq t< 1-m^X, \\
		\dagger&\text{if }t\geq1-m^X.
	\end{cases}
\end{equation*}

\subsection{Path space and topology}

The processes appearing in this paper are viewed as random variables taking values in the measurable space $(D[0,\infty),\D)$, where $D[0,\infty)$ is the space of real-valued càdlàg functions defined on $[0,\infty)$ and $\D$ is the Borel $\sigma$-algebra induced by the Skorokhod topology. A standard reference treating this space is \cite[\S16]{billingsley}.

For convergence in distribution it is often sufficient to consider the restrictions of processes to intervals of the form $[0,T]$ for $T>0$. Consider $D[0,\infty)$-valued random variables (i.e.\ stochastic processes) $X,X^1,X^2,\dotsc$. Then $X^n\convd X$ if and only if $(X^n_t)_{t\in[0,T]}\convd(X_t)_{t\in[0,T]}$ for all $T>0$ where $X$ is almost surely continuous at $T$, see e.g.\ \cite[Thm.~16.7]{billingsley}. Here the restrictions are seen as random variables in $D[0,T]$ (the space of càdlàg functions on $[0,T]$).

\subsection{The central representation}\label{subsec:representation}

Suppose for a moment that $X$ solves the SDE \eqref{eq:SDEx} with $x_0=0$ and $\mu\equiv0$. Then $X$ is a continuous local $(\ff_t)$-martingale starting at zero. We denote the quadratic variation of $X$ by $[X]$ and recall that it is almost surely given by
\begin{equation*}
	[X]_t=\int_0^t \sigma^2(X_s)\idd s,\qquad t\geq0.
\end{equation*}
Note that $[X]$ is continuous and strictly increasing and denote its inverse by $\tau$. We define a new filtration $(\G_t)$ by $\G_t:=\ff_{\tau_t}$. A standard result (see e.g.\ \cite[Thm.~19.4]{kallenberg3}) gives the existence of a Brownian motion $\tilde W$ with respect to a standard extension $(\hat\G_t)$ of $(\G_t)$ (see \cite[p.~420]{kallenberg3}) such that $X=(\tilde W_{[X]_t})_{t\geq0}$ a.s. Furthermore, for any $s\geq0$ the random variable $[X]_s$ is a $(\G_t)_{t\geq0}$-stopping time.

\subsection{Stable convergence}

A central concept in this paper is the notion of \emph{stable convergence} which was originally introduced in \cite{Renyi63}. Later papers which are also of interest include \cite{aldous_eagleson,pod_vet}. In this subsection we present only the results which are relevant for this paper.

We consider a probability space $(\Omega,\ff,\p)$ supporting a sequence of random variables $(X_n)$ taking values in some Polish space. We say that $X_n$ converges stably to $X$ (written $X_n\stably X$) defined on an extension $(\tilde\Omega,\tilde\ff,\tilde\p)$ of the space if
\begin{equation}\label{eq:stable_def}
	\Mean{f(X_n)Z}\to\tilde\e[f(X)Z]
\end{equation}
for all bounded continuous functions $f$ and all bounded $\ff$-measurable $Z$.

The extension of $(\Omega,\ff,\p)$ is a product space $(\tilde\Omega,\tilde\ff)=(\Omega\times\Omega',\ff\otimes\ff')$ equipped with a probability measure $\tilde\p$ which satisfies $\tilde\p(A\times\Omega')=\p(A)$ for any $A\in\ff$. A random variable $Z$ defined on $(\Omega,\ff,\p)$ becomes a random variable on the extension by defining $Z(\omega,\omega'):=Z(\omega)$. We often need the extension to support a random variable $X$ which is independent of $\ff$. In that case we let $(\Omega',\ff',\p')$ be a probability space on which $X$ can be defined. As before $X$ can be viewed as a random variable on $(\Omega\times\Omega',\ff\otimes\ff')$, and taking $\tilde\p=\p\otimes\p'$ gives the desired independence. In this case, and when $X_n\stably X$, we sometimes say that the convergence is \emph{mixing}. This concept was first introduced in \cite{Renyi58}.

In order to work with stable convergence we need a few key results.

\begin{lemma}\label{lem:stable_properties}
	Assume that $X_n\stably X$. Then we have the following:
	\begin{enumerate}[label=(\roman*)]
		\item If $Y,Y_1,Y_2,\dotsc$ are random variables (taking values in some Polish space) on $(\Omega,\ff,\p)$ and $Y_n\cip Y$, then $(X_n,Y_n)\stably(X,Y)$.
		\item If $g$ is a Borel-measurable function taking values in a Polish space and $g$ is almost surely continuous at $X$ then $g(X_n)\stably g(X)$.
	\end{enumerate}
\end{lemma}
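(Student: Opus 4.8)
The plan is to repackage stable convergence as weak convergence of a family of finite measures and then deduce both statements from classical facts on Polish spaces. Fix a bounded $\ff$-measurable $Z$; after splitting into positive and negative parts and rescaling, we may assume $0\le Z\le1$. Let $E$ denote the Polish space carrying $X_n$ and $X$, endowed with a bounded compatible metric, and set $\mu_n(B):=\Mean{\ind{X_n\in B}Z}$ and $\mu(B):=\tilde\e[\ind{X\in B}Z]$. These are finite Borel measures on $E$ with common total mass $\Mean{Z}=\tilde\e[Z]$, and \eqref{eq:stable_def} says exactly that $\int f\,d\mu_n\to\int f\,d\mu$ for all bounded continuous $f$, i.e.\ $\mu_n$ converges weakly to $\mu$; conversely $X_n\stably X$ as soon as this holds for every such $Z$. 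Since bounded uniformly continuous functions determine weak convergence of finite measures of fixed total mass on a metric space, it also suffices to verify \eqref{eq:stable_def} only for bounded uniformly continuous $f$. I would use this reformulation throughout.

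For (ii), I would first reduce to a bounded measurable $g$ that is almost surely continuous at $X$: the general statement then follows by applying this case to $h\circ g$ for each bounded continuous $h$, since $h\circ g$ is again almost surely continuous at $X$. Writing $D_g$ for the Borel set of discontinuity points of $g$, one has $\mu(D_g)=\tilde\e[\ind{X\in D_g}Z]\le\tilde\p(X\in D_g)=0$, so the continuous mapping theorem for fixed-mass finite measures gives that $\mu_n\circ g^{-1}$ converges weakly to $\mu\circ g^{-1}$. Unwinding the definitions, this is precisely $\Mean{h(g(X_n))Z}\to\tilde\e[h(g(X))Z]$, and since $Z$ was arbitrary we obtain $g(X_n)\stably g(X)$.

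For (i), let $F$ be the Polish space of values of the $Y_n$, with a bounded compatible metric $\rho_F$. Fix a bounded uniformly continuous $f$ on $E\times F$ and a bounded $\ff$-measurable $Z$. Uniform continuity of $f$ in the second argument together with $Y_n\cip Y$ gives $\Mean{|f(X_n,Y_n)-f(X_n,Y)|\,|Z|}\to0$ by a routine estimate, so it remains to show $\Mean{f(X_n,Y)Z}\to\tilde\e[f(X,Y)Z]$. When $Y=\sum_{i=1}^k y_i\ind{A_i}$ is a simple $\ff$-measurable function this is immediate from \eqref{eq:stable_def} applied to the bounded continuous functions $f(\cdot,y_i)$ and the bounded $\ff$-measurable weights $\ind{A_i}Z$, since $\Mean{f(X_n,Y)Z}=\sum_i\Mean{f(X_n,y_i)\ind{A_i}Z}$. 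For general $Y$ I would approximate: partitioning $F$ into countably many Borel sets of $\rho_F$-diameter at most $1/m$ and lumping all but finitely many of them (chosen to carry total probability at most $1/m$) yields simple $\ff$-measurable $Y^{(m)}$ with $\rho_F(Y^{(m)},Y)\le1/m$ off an event of probability at most $1/m$; boundedness and uniform continuity of $f$ then make both $\Mean{|f(X_n,Y^{(m)})-f(X_n,Y)|\,|Z|}$ and $|\tilde\e[(f(X,Y^{(m)})-f(X,Y))Z]|$ uniformly small in $m$ (using that $\tilde\p$ and $\p$ agree on $\ff$), and a three-$\varepsilon$ argument with the simple-function case concludes.

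The main obstacle is the product-space estimate in (i): the test function $f$ need not factorize, which is what forces both the reduction to uniformly continuous $f$ and the explicit simple-function approximation of the $F$-valued variable $Y$ in a general Polish space. Part (ii) is comparatively soft, being essentially the continuous mapping theorem once stable convergence has been recast as weak convergence of the measures $\mu_n$.
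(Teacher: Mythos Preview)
Your argument is correct. The paper does not actually prove this lemma; it simply cites \cite[Thm.~3.18]{stablebook}, so there is no ``paper's own proof'' to compare strategies with. Your approach---recasting $X_n\stably X$ as weak convergence of the finite measures $\mu_n(\cdot)=\Mean{\ind{X_n\in\cdot}Z}$ and then invoking the continuous mapping theorem for (ii) and a uniform-continuity plus simple-function approximation for (i)---is a standard and complete way to establish these facts directly. The only places worth tightening in a final write-up are routine: make explicit that $E\times F$ carries a bounded product metric so that the reduction to uniformly continuous test functions applies on the product space, and note that your simple $Y^{(m)}$ is $\ff$-measurable because $Y$ is, which is what allows the weights $\ind{A_i}Z$ to be used in \eqref{eq:stable_def}.
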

\begin{proof}
	See e.g.\ \cite[Thm.~3.18]{stablebook}.
\end{proof}

If $\H\subseteq\ff$ is a sub-$\sigma$-algebra and \eqref{eq:stable_def} is only known to hold for $\H$-measurable $Z$ we say that $X_n$ converges $\H$-stably to $X$ (written $X_n\overset{\H-st}{\rightarrow}X$). The following basic lemma shows that sometimes stable convergence can be obtained just by proving $\H$-stable convergence for a suitable sub-$\sigma$-algebra $\H$. This trick is used in e.g.\ the proof of \cite[Thm.~4.3.1]{jacod_protter}.

\begin{lemma}\label{lem:H-stable_to_F-stable}
	Let $\H\subseteq\ff$ be a sub-$\sigma$-algebra. Assume that each $X_n$ is $\H$-measurable, $X$ is independent of $\ff$ and $X_n\overset{\H-st}{\rightarrow}X$. Then $X_n\stably X$.
\end{lemma}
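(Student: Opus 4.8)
The plan is to verify the defining relation \eqref{eq:stable_def} for arbitrary bounded $\ff$-measurable $Z$, starting from the hypothesis that it holds for bounded $\H$-measurable $Z$. Fix a bounded continuous $f$ and a bounded $\ff$-measurable $Z$; write $\tilde Z:=\Mean{Z\given\H}$, which is bounded and $\H$-measurable. The key observation is that, because each $X_n$ is $\H$-measurable, $f(X_n)$ is $\H$-measurable, so the tower property gives $\Mean{f(X_n)Z}=\Mean{f(X_n)\,\Mean{Z\given\H}}=\Mean{f(X_n)\tilde Z}$. Now apply the $\H$-stable convergence $X_n\overset{\H-st}{\to}X$ with the bounded $\H$-measurable test variable $\tilde Z$ to conclude $\Mean{f(X_n)\tilde Z}\to\tilde\e[f(X)\tilde Z]$.

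It remains to identify $\tilde\e[f(X)\tilde Z]$ with $\tilde\e[f(X)Z]$, and this is the step where the independence hypothesis on $X$ is used. On the extension $(\tilde\Omega,\tilde\ff,\tilde\p)=(\Omega\times\Omega',\ff\otimes\ff',\p\otimes\p')$ the variable $X$ lives on the $\Omega'$-coordinate and is independent of $\ff\supseteq\H$; hence, conditioning on $\ff$, we have $\tilde\e[f(X)Z]=\tilde\e[Z\,\tilde\e[f(X)\given\ff]]=\tilde\e[Z]\cdot\tilde\e[f(X)]=\Mean{Z}\,\tilde\e[f(X)]$, and likewise $\tilde\e[f(X)\tilde Z]=\Mean{\tilde Z}\,\tilde\e[f(X)]=\Mean{Z}\,\tilde\e[f(X)]$, using $\Mean{\tilde Z}=\Mean{\Mean{Z\given\H}}=\Mean{Z}$. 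Therefore $\tilde\e[f(X)\tilde Z]=\tilde\e[f(X)Z]$, and combining with the previous paragraph yields $\Mean{f(X_n)Z}\to\tilde\e[f(X)Z]$, which is \eqref{eq:stable_def}. Since $f$ and $Z$ were arbitrary, $X_n\stably X$.

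The argument is essentially a two-line computation, so there is no serious obstacle; the only point requiring care is that the conclusion genuinely needs $X$ to be independent of all of $\ff$, not merely of $\H$ — otherwise replacing $Z$ by $\Mean{Z\given\H}$ would change the limit $\tilde\e[f(X)Z]$. One should also note that $\tilde\e[f(X)\given\ff]$ is a.s.\ constant equal to $\tilde\e[f(X)]$ precisely because of the product structure of $\tilde\p$, which is what makes the two conditional expectations above collapse to the same number.
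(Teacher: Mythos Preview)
Your proof is correct and follows essentially the same approach as the paper: use the tower property with $\Mean{Z\given\H}$ to reduce to the $\H$-stable hypothesis, then use independence of $X$ from $\ff$ to factor $\tilde\e[f(X)\,\Mean{Z\given\H}]=\tilde\e[f(X)]\,\Mean{Z}=\tilde\e[f(X)Z]$. The paper does this in two lines; your version just spells out the intermediate conditioning steps in slightly more detail.
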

\begin{proof}
	We must verify \eqref{eq:stable_def} for all bounded continuous functions $f$ and all bounded $\ff$-measurable $Z$. Since $X_n$ is $\H$-measurable and $X_n\overset{\H-st}{\rightarrow}X$ it holds that
	\begin{equation*}
		\e[f(X_n)Z]=\e[f(X_n)\e[Z\mid\H]]\to\tilde\e[f(X)\e[Z\mid\H]].
	\end{equation*}
	Finally the assumed independence yields
	\begin{equation*}
		\tilde\e[f(X)\e[Z\mid\H]]=\tilde\e[f(X)]\tilde\e[Z]=\tilde\e[f(X)Z].
	\end{equation*}
\end{proof}

It is often useful to work with equivalent definitions of stable convergence.
\begin{lemma}\label{lem:stable_definitions}
	For a sub-$\sigma$-algebra $\H\subseteq\ff$ the following statements are equivalent:
	\begin{enumerate}[label=(\roman*)]
		\item $X_n\overset{\H-st}{\rightarrow}X$.
		\item $(X_n,Y)\overset{\H-st}{\rightarrow}(X,Y)$ for any $\H$-measurable $Y$ taking values in some Polish space.
		\item $(X_n,Y)\convd(X,Y)$ for any $\H$-measurable $Y$ taking values in some Polish space.
		\item $(X_n,\ind{F})\convd(X,\ind{F})$ for any $F\in\E$, where $\E\subseteq\H$ is closed under finite intersections and further satisfies $\Omega\in\E$ and $\sigma(\E)=\H$.
	\end{enumerate}
\end{lemma}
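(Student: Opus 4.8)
The plan is to establish the chain of implications (i) $\Rightarrow$ (ii) $\Rightarrow$ (iii) $\Rightarrow$ (iv) $\Rightarrow$ (i). Two links are immediate. For (ii) $\Rightarrow$ (iii): $\H$-stable convergence implies convergence in distribution by taking the weight $Z\equiv1$ in \eqref{eq:stable_def}, so $(X_n,Y)\overset{\H-st}{\rightarrow}(X,Y)$ gives $(X_n,Y)\convd(X,Y)$. For (iii) $\Rightarrow$ (iv): when $F\in\E\subseteq\H$ the indicator $\ind{F}$ is one of the admissible $\H$-measurable random variables $Y$ in (iii). Observe also that (i) forces $X_n\convd X$ (again take $Z\equiv1$), so under (i) the sequence $(X_n)$ is tight by Prokhorov's theorem; this is used below.

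For (i) $\Rightarrow$ (ii), let $Y$ take values in a Polish space $S'$ and let $Z$ be bounded and $\H$-measurable. Splitting $Z$ into positive and negative parts we may assume $Z\geq0$, and discarding the trivial case $\e[Z]=0$ we may normalise to $\e[Z]=1$, so that $Q:=Z\cdot\p$ and $\tilde Q:=Z\cdot\tilde\p$ are probability measures (their total masses agree since $\tilde\p(\,\cdot\times\Omega')=\p$). For bounded continuous $f$ on the state space of $X_n$ and $h\in C_b(S')$, the random variable $h(Y)Z$ is again bounded and $\H$-measurable, so (i) yields
\begin{equation*}
	\e_Q[f(X_n)h(Y)]=\e[f(X_n)h(Y)Z]\to\tilde\e[f(X)h(Y)Z]=\tilde\e_{\tilde Q}[f(X)h(Y)].
\end{equation*}
Hence the laws $\mu_n$ of $(X_n,Y)$ under $Q$ converge, against product test functions $f\otimes h$, to the law $\mu$ of $(X,Y)$ under $\tilde Q$. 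The family $(\mu_n)$ is tight: its first marginal is bounded above by $\norm{Z}_\infty$ times the law of $X_n$, which is uniformly tight, and its second marginal is the fixed law of $Y$ under $Q$. So any subsequence of $(\mu_n)$ has a further weakly convergent subsequence, and since the product functions $f\otimes h$ form a separating class on the product of the two Polish spaces, every such weak limit equals $\mu$; therefore $\mu_n\to\mu$ weakly. This is exactly $\e[g(X_n,Y)Z]\to\tilde\e[g(X,Y)Z]$ for all bounded continuous $g$, i.e.\ $(X_n,Y)\overset{\H-st}{\rightarrow}(X,Y)$.

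The main implication is (iv) $\Rightarrow$ (i). Applying (iv) to the bounded continuous function $(x,a)\mapsto f(x)\bigl((0\vee a)\wedge1\bigr)$, which coincides with $f(x)a$ whenever $a\in\{0,1\}$, gives $\e[f(X_n)\ind{F}]\to\tilde\e[f(X)\ind{F}]$ for every bounded continuous $f$ and every $F\in\E$. Fix such an $f$; writing $f$ as a difference of nonnegative bounded continuous functions we may assume $f\geq0$. Then $\nu_n(F):=\e[f(X_n)\ind{F}]$ and $\nu(F):=\tilde\e[f(X)\ind{F}]$ are finite measures on $(\Omega,\H)$ with $\nu_n(F)\leq\norm{f}_\infty\p(F)$ and $\nu(F)\leq\norm{f}_\infty\p(F)$ for all $n$. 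The class $\D:=\Set{F\in\H\given\nu_n(F)\to\nu(F)}$ contains $\Omega$, is closed under proper differences by linearity, and is closed under increasing limits: for $F_k\uparrow F$ one has $\sup_n\nu_n(F\setminus F_k)\leq\norm{f}_\infty\p(F\setminus F_k)\to0$, which permits interchanging $\lim_n$ with $\lim_k$. Thus $\D$ is a $\lambda$-system containing the $\pi$-system $\E$ with $\sigma(\E)=\H$, so $\D=\H$ by Dynkin's lemma. Finally, for general bounded $\H$-measurable $Z$, approximate uniformly by simple functions $Z_k$ and combine linearity with the estimate $\abs{\e[f(X_n)(Z-Z_k)]}\leq\norm{f}_\infty\norm{Z-Z_k}_\infty$ (and its analogue on the extension) to conclude $\e[f(X_n)Z]\to\tilde\e[f(X)Z]$, which is \eqref{eq:stable_def}.

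I expect the $\lambda$-system step in (iv) $\Rightarrow$ (i) to be the main obstacle: pointwise convergence of finite measures on a generating $\pi$-system does not in general extend to the whole $\sigma$-algebra, and here it does only because the weights $f(X_n)$ are uniformly bounded, yielding the uniform absolute continuity with respect to $\p$ that is needed to swap the two limits. The tightness-and-separation argument in (i) $\Rightarrow$ (ii) — reducing joint $\H$-stable convergence to product test functions — is routine on Polish spaces but should be spelled out with a little care.
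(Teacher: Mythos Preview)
Your proof is correct. The paper does not argue the lemma at all but simply cites \cite[Prop.~1]{pod_vet} for the equivalence of (i)--(iii) and \cite[Thm.~3.17]{stablebook} for the equivalence of (i) and (iv). So your route is different only in the sense of being self-contained rather than deferred to the literature. The substance of your argument --- Dynkin's $\pi$--$\lambda$ lemma for (iv)$\Rightarrow$(i), made possible by the uniform domination $\nu_n\leq\norm{f}_\infty\p$ that allows the interchange of limits, and the tightness-plus-separating-class argument under the tilted measure $Q=Z\cdot\p$ for (i)$\Rightarrow$(ii) --- is exactly the standard machinery underlying those cited results, so in content the two approaches coincide. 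What your write-up buys is transparency: the reader sees precisely where the uniform boundedness of $f(X_n)$ enters and why Polish state spaces (hence Prokhorov and separation by product functions) are needed.
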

\begin{proof}
	For equivalence of (i)-(iii) see \cite[Prop.~1]{pod_vet}, and for equivalence of (i) and (iv) see \cite[Thm.~3.17]{stablebook}.
\end{proof}

Independence plays a large role for convergence of joint distributions. The following lemma shows that joint stable convergence can also be obtained under certain independence assumptions.

\begin{lemma}\label{lem:stable_conv_independence}
	Let $(X_n),(Y_n)$ be independent sequences of random variables, and let $X,Y$ be independent random variables such that $X$ and $Y$ are independent of $\ff$, $X_n\stably X$ and $Y_n\stably Y$. Then $(X_n,Y_n)\stably(X,Y)$.
\end{lemma}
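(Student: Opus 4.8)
The plan is to reduce the joint statement to a single stable convergence together with an application of Lemma~\ref{lem:H-stable_to_F-stable}. The key observation is that because $(X_n)$ and $(Y_n)$ are independent sequences, we can realize them on a product of two copies of the base space; more precisely, set $\H:=\sigma(X_n:n\geq1)$ and let $\mathcal K:=\sigma(Y_n:n\geq1)$. By independence of the two sequences, $\H$ and $\mathcal K$ are independent sub-$\sigma$-algebras of $\ff$. The idea is to first upgrade $Y_n\stably Y$ to a \emph{joint} convergence in which the $\H$-information is carried along, then feed that into the convergence of $X_n$.

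First I would show $(X_n,Y_n)\overset{\H-st}{\rightarrow}(X,Y)$. To do this, fix a bounded $\H$-measurable random variable $Z$ and bounded continuous $f,g$; I want $\e[f(X_n)g(Y_n)Z]\to\tilde\e[f(X)g(Y)Z]$. Since $Z$ and $X_n$ are $\H$-measurable and $Y_n$ is $\mathcal K$-measurable, independence of $\H$ and $\mathcal K$ gives $\e[f(X_n)g(Y_n)Z]=\e[f(X_n)Z]\,\e[g(Y_n)]$. Now $\e[g(Y_n)]\to\tilde\e[g(Y)]$ because $Y_n\stably Y$ (stable convergence implies convergence in distribution; in fact since $Y$ is independent of $\ff$ this is just $\tilde\e[g(Y)]$), and $\e[f(X_n)Z]\to\tilde\e[f(X)Z]$ because $X_n\stably X$ and $Z$ is $\ff$-measurable. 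Multiplying the two limits and using that on the extension $X$ and $Y$ are independent of each other and of $\ff$, so $\tilde\e[f(X)Z]\,\tilde\e[g(Y)]=\tilde\e[f(X)g(Y)Z]$, we obtain the desired $\H$-stable convergence. By the equivalences in Lemma~\ref{lem:stable_definitions}, it suffices to check the displayed limit for $f,g$ ranging over a suitable class, but the direct computation above already covers all bounded continuous $f,g$, so $(X_n,Y_n)\overset{\H-st}{\rightarrow}(X,Y)$.

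Next I would invoke Lemma~\ref{lem:H-stable_to_F-stable} with the pair $(X_n,Y_n)$ in place of $X_n$: each $(X_n,Y_n)$ is $\ff$-measurable (indeed $X_n$ is $\H$-measurable and $Y_n$ is $\mathcal K$-measurable, both inside $\ff$), the limit $(X,Y)$ is independent of $\ff$ by hypothesis, and we have just shown $\H$-stable convergence. Strictly speaking Lemma~\ref{lem:H-stable_to_F-stable} as stated requires $X_n$ to be $\H$-measurable, so I would instead apply it with $\H$ replaced by the larger sub-$\sigma$-algebra $\H\vee\mathcal K$, after first noting that the computation in the previous paragraph in fact establishes $(X_n,Y_n)\overset{(\H\vee\mathcal K)-st}{\rightarrow}(X,Y)$: for a bounded $(\H\vee\mathcal K)$-measurable $Z$ one approximates $Z$ by linear combinations of products $Z'Z''$ with $Z'$ being $\H$-measurable and $Z''$ being $\mathcal K$-measurable (this is where the monotone class / Dynkin argument enters), and for each such product the same independence splitting applies. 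Then $(X_n,Y_n)$ is $(\H\vee\mathcal K)$-measurable, $(X,Y)$ is independent of $\ff\supseteq\H\vee\mathcal K$, and Lemma~\ref{lem:H-stable_to_F-stable} yields $(X_n,Y_n)\stably(X,Y)$.

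The main obstacle is the bookkeeping around which $\sigma$-algebra plays the role of $\H$ in Lemma~\ref{lem:H-stable_to_F-stable}: one must ensure that both $X_n$ and $Y_n$ are measurable with respect to it while the limit $(X,Y)$ remains independent of all of $\ff$, and that the product-measure structure used to define the extension is compatible with the independence of the two sequences. Once the extension is set up so that $(X_n)$, $(Y_n)$, $X$, $Y$ all live in the right places and the independence of $\H$ and $\mathcal K$ is exploited, the analytic content is just the factorization of expectations and passing to the limit in each factor — no estimates are needed. The only genuinely technical point is justifying the extension of the factorization from product test functions $Z'Z''$ to all bounded $(\H\vee\mathcal K)$-measurable $Z$, which is a routine monotone class argument.
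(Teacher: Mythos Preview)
Your proof is correct and follows essentially the same route as the paper: both reduce to $(\H\vee\mathcal K)$-stable convergence (with $\H=\sigma(X_n:n\geq1)$, $\mathcal K=\sigma(Y_n:n\geq1)$), exploit the independence of the two sequences to factor, and then invoke Lemma~\ref{lem:H-stable_to_F-stable}. The only difference is packaging: where you invoke a monotone class argument to pass from product test variables $Z'Z''$ to general bounded $(\H\vee\mathcal K)$-measurable $Z$, the paper instead uses criterion (iv) of Lemma~\ref{lem:stable_definitions} with the $\pi$-system $\E=\{A\cap B:A\in\H,\,B\in\mathcal K\}$, which absorbs precisely that monotone class step and also makes the detour through product test functions $f(x)g(y)$ unnecessary.
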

\begin{proof}
	Let $\mathcal{A}=\sigma(\Set{X_n\given n\in\N})$, $\B=\sigma(\Set{Y_n\given n\in\N})$ and $\H=\sigma(\mathcal{A}\cup\B)$. According to Lemma~\ref{lem:H-stable_to_F-stable} it is sufficient to prove $\H$-stable convergence. For $A\in\mathcal{A}$ and $B\in\B$ we see that
	\begin{equation*}
		(X_n,\ind{A},Y_n,\ind{B})\convd(X,\ind{A},Y,\ind{B})
	\end{equation*}
	due to the assumed independence. Hence, $(X_n,Y_n,\ind{A\cap B})\convd(X,Y,\ind{A\cap B})$. The $\H$-stable convergence follows since condition (iv) in Lemma~\ref{lem:stable_definitions} is satisfied with $\E$ being the collection of sets on the form $A\cap B$ where $A\in\mathcal{A}$ and $B\in\B$.
\end{proof}

\section{Main results}\label{sec:main_results}

\subsection{Zooming in at a fixed time}

We begin with a limit theorem that formalizes the intuitive understanding of a diffusion process. Namely that the local behavior of $X$ at a fixed time $T>0$ is that of a scaled Brownian motion. To simplify we consider the time point $T=1$.

For $\epsilon>0$ and $t\in\R$ we let $X^{(\epsilon)}_t:=\epsilon^{-1/2}(X_{1+\epsilon t}-X_1)$. Consider further two standard Brownian motions $U^{(1)}$ and $U^{(2)}$ defined on an extension of $(\Omega,\ff,\p)$ which are independent of each other and of $\ff$.

\begin{theorem}\label{thm:zoom_at_1}
	It holds that
	\begin{equation*}
		\big((X^{(\epsilon)}_{-t})_{t\geq0},(X^{(\epsilon)}_t)_{t\geq0}\big)\stably\big(\sigma(X_1)U^{(1)},\sigma(X_1)U^{(2)}\big)\qquad\text{as }\epsilon\downarrow0.
	\end{equation*}
\end{theorem}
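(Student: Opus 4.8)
The plan is to peel off the drift, write the martingale part of $X$ as a time-changed Brownian motion, and thereby reduce the two coordinates of the limit — which are governed by the increments of $W$ just after, respectively just before, the deterministic time $1$ — to two separate Brownian zooming-in statements that are recombined using the stable-convergence lemmas of \S\ref{sec:def_and_prereq}. First I would write $X_t=x_0+A_t+M_t$ with $A_t=\int_0^t\mu(X_s)\dd s$ and $M_t=\int_0^t\sigma(X_s)\dd W_s$. Since $\mu$ is locally bounded and $X$ is continuous, $\sup_{\abs{t}\le T}\abs{\epsilon^{-1/2}(A_{1+\epsilon t}-A_1)}\le\epsilon^{1/2}T\sup_{\abs{u-1}\le\epsilon T}\abs{\mu(X_u)}\to0$ a.s., so the $A$-part vanishes uniformly on compacts in probability; by Lemma~\ref{lem:stable_properties} it suffices to prove the claim with $X$ replaced by $M$.

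Next I would invoke \S\ref{subsec:representation}: $[M]_t=\int_0^t\sigma^2(X_s)\dd s$ is continuous and strictly increasing by Assumption~\ref{as:mu_sigma}(ii), $M=(B_{[M]_t})_t$ for a Brownian motion $B$ on a standard extension, and $S:=[M]_1$ is a stopping time for the corresponding filtration. Because $s\mapsto\sigma^2(X_s)$ is continuous and strictly positive, $[M]_{1+s}=S+\sigma^2(X_1)s+\oh(s)$ locally uniformly in $s$, a.s. Together with $C$-tightness on compacts of $(\epsilon^{-1/2}(B_{S+\epsilon u}-B_S))_u$ this lets me replace the random time change by the linear one; pulling out the $\ff$-measurable factor $\sigma(X_1)$ via Brownian scaling and a continuous-mapping argument (Lemma~\ref{lem:stable_properties}), everything reduces to the core claim that, as $\delta\downarrow0$,
\begin{equation*}
	\big((\delta^{-1/2}(B_{S-\delta t}-B_S))_{t\ge0},\,(\delta^{-1/2}(B_{S+\delta t}-B_S))_{t\ge0}\big)\stably(V^{(1)},V^{(2)}),
\end{equation*}
where $V^{(1)},V^{(2)}$ are independent standard Brownian motions, independent of $\ff$.

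For the forward coordinate the strong Markov property at $S$ shows $(B_{S+u}-B_S)_{u\ge0}$ is a Brownian motion, so by Brownian scaling $(\delta^{-1/2}(B_{S+\delta t}-B_S))_t$ has the law of a standard Brownian motion for every $\delta$; that the limit is moreover independent of all of $\ff$ (mixing) I would get from a standard high-frequency argument — by Lemma~\ref{lem:stable_definitions}(iv) it suffices to test against indicators from a generating $\pi$-system, and the contribution of the shrinking window $[S,S+\delta T]$ to such events is asymptotically negligible. For the backward coordinate I would time-reverse: reading $M$ backwards from time $1$, the rescaled small-time fluctuations should again converge to a standard Brownian motion, because the reversed process is a semimartingale whose martingale part has quadratic-variation rate tending to $\sigma^2(X_1)$ while its finite-variation part is of order $\epsilon$ and hence killed by the $\epsilon^{-1/2}$ scaling, so a martingale functional CLT applies (with the same mixing upgrade). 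Independence of $V^{(1)}$ and $V^{(2)}$ is automatic: the forward zoom is independent of the $\sigma$-algebra at time $S$, which carries the backward zoom. Finally, the two coordinates are driven by increments of $W$ on $[1,1+\epsilon T]$ and on $[1-\epsilon T,1]$, disjoint intervals abutting the deterministic time $1$, hence independent sources of randomness, so Lemma~\ref{lem:stable_conv_independence} assembles the joint stable convergence; undoing the reductions (re-inserting the negligible $A$-part and the $\oh(s)$ time-change error via Lemma~\ref{lem:stable_properties}, and using $\sigma(X_1)V^{(i)}\overset{d}{=}\sigma(X_1)U^{(i)}$ with the pair independent of $\ff$, where the random rescaling by $\sigma^2(X_1)$ does not affect the law since $\sigma\ge0$) gives the theorem.

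The main obstacle is the backward coordinate. Zooming a Brownian motion backwards from a general stopping time need not yield a Brownian motion — at a passage time it yields a Bessel-$3$ process, which is precisely the mechanism behind the supremum result elsewhere in this paper — so the argument must exploit that $S=[M]_1$ is not of that type, i.e.\ that $B$ oscillates around $B_S$ in every left neighbourhood of $S$, which is ensured by the strict positivity and continuity of the clock rate $\sigma^2(X_\cdot)$ at time $1$. Making the semimartingale decomposition of the time-reversed process precise under only Assumption~\ref{as:mu_sigma}, rather than invoking a reversal theorem that would demand extra smoothness of the law of $X_1$, is the delicate point, and is where I expect most of the technical work to lie.
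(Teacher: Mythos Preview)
Your overall architecture --- remove the drift, pass to the time-changed Brownian motion $\tilde W$ with $M=\tilde W_{[M]_\cdot}$, prove a two-sided zoom for $\tilde W$ at the stopping time $S=[M]_1$, and then undo the time change via $\epsilon^{-1}([M]_{1+\epsilon t}-[M]_1)\to t\sigma^2(X_1)$ --- is exactly the paper's, and your forward coordinate together with the use of Lemma~\ref{lem:stable_conv_independence} at the level of $\tilde W$ matches the paper's Lemma~\ref{lem:zoom_BM_stopping_time}. The divergence is entirely in how you treat the backward coordinate.

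You propose to time-reverse $M$ from time $1$ and run a martingale functional CLT on the reversed process, and you correctly flag the semimartingale decomposition of the reversal under only Assumption~\ref{as:mu_sigma} as the delicate step. The paper sidesteps this completely with a purely forward device: for $R>0$ set $Y^{(\epsilon)}_u:=\epsilon^{-1/2}(\tilde W_{[M]_{1-\epsilon R}+\epsilon u}-\tilde W_{[M]_{1-\epsilon R}})$, which is a standard Brownian motion for every $\epsilon$ because $[M]_{1-\epsilon R}$ is again a stopping time, and write
\[
\epsilon^{-1/2}\bigl(\tilde W_{[M]_1-\epsilon t}-\tilde W_{[M]_1}\bigr)=-(Y^{(\epsilon)}_{s_\epsilon}-Y^{(\epsilon)}_{s_\epsilon-t}),\qquad s_\epsilon:=\epsilon^{-1}\bigl([M]_1-[M]_{1-\epsilon R}\bigr)\to R\sigma^2(X_1)=:s.
\]
On $\{s>T\}$, whose probability can be pushed arbitrarily close to $1$ by taking $R$ large, the backward increments over $[0,T]$ are forward increments of $Y^{(\epsilon)}$ evaluated at times converging to $[s-T,s]$, so the stable limit $-(U_s-U_{s-t})_{t\in[0,T]}\overset{d}{=}(U^{(1)}_t)_{t\in[0,T]}$ follows from the forward statement plus continuity of composition. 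The only extra ingredient is that the forward zoom must be proved stably (so that the random evaluation times $s_\epsilon\to s$ can be passed through) and with the starting stopping time $[M]_{1-\epsilon R}$ rather than just $[M]_1$; this is the paper's Lemma~\ref{lem:zoom_in_from_right}. No reversal theorem, no regularity of the density of $X_1$, and no CLT for the reversed process is needed --- the concern in your final paragraph simply disappears.
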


Dealing with $(X^{(\epsilon)}_t)_{t\geq0}$ is fairly simple as we look forward in time. Looking backwards in time is generally harder and proving the convergence of $(X^{(\epsilon)}_{-t})_{t\geq0}$ is indeed rather technical. The proof of Theorem~\ref{thm:zoom_at_1} is deferred to \S\ref{sec:proof_zoom_at_1}.

Looking backwards in time may be difficult but it is quite useful. The following result is very intuitive in addition to being necessary for proving Theorem~\ref{thm:zoom_sup_diffusion} below, and proving it is now trivial.

\begin{corollary}\label{cor:max_not_at_1}
	Almost surely $m^X\neq1$.
\end{corollary}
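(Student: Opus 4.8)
The statement is an immediate consequence of Theorem~\ref{thm:zoom_at_1}, which is precisely why it is now trivial. The key point is geometric: on the event $\{m^X=1\}$ we have $X_s\leq\overline{X}=X_1$ for every $s\in[0,1]$, so for each $\epsilon\in(0,1)$ and each $t\in[0,1]$ we have $1-\epsilon t\in(0,1]$ and hence $X^{(\epsilon)}_{-t}=\epsilon^{-1/2}(X_{1-\epsilon t}-X_1)\leq0$. Consequently
\begin{equation*}
	\{m^X=1\}\subseteq\Bigl\{\sup_{t\in[0,1]}X^{(\epsilon)}_{-t}\leq0\Bigr\}\qquad\text{for all }\epsilon\in(0,1).
\end{equation*}

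Next I would push this inclusion through the limit of Theorem~\ref{thm:zoom_at_1}. The map $g\colon D[0,\infty)\to\R$ given by $g(f)=\sup_{t\in[0,1]}f(t)$ is continuous at every $f\in C[0,\infty)$ (convergence in the Skorokhod topology to a continuous limit is uniform on compacts), and the limit $\sigma(X_1)U^{(1)}$ is almost surely continuous. Hence Lemma~\ref{lem:stable_properties}(ii) yields, as $\epsilon\downarrow0$,
\begin{equation*}
	\sup_{t\in[0,1]}X^{(\epsilon)}_{-t}\ \convd\ \sigma(X_1)\sup_{t\in[0,1]}U^{(1)}_t=:Y.
\end{equation*}
Here $Y>0$ almost surely: $\sigma(X_1)>0$ a.s.\ by Assumption~\ref{as:mu_sigma}(ii) (the point discussed below), while $\sup_{t\in[0,1]}U^{(1)}_t>0$ a.s.\ because Brownian motion a.s.\ assumes strictly positive values in every right-neighborhood of the origin. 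Applying the portmanteau theorem to the closed set $(-\infty,0]$ then gives
\begin{equation*}
	\p(m^X=1)\leq\limsup_{\epsilon\downarrow0}\p\Bigl(\sup_{t\in[0,1]}X^{(\epsilon)}_{-t}\leq0\Bigr)\leq\p(Y\leq0)=0.
\end{equation*}

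The only step that is not entirely mechanical is the claim that $\sigma(X_1)>0$ almost surely: one must observe that $X_1$ almost surely takes values in the range of $X$, so that Assumption~\ref{as:mu_sigma}(ii) indeed applies to $\sigma(X_1)$ --- this is exactly the content of the remark that the positivity assumption guarantees ``some amount of noise at any time.'' Everything else is the portmanteau theorem together with the elementary fact that Brownian motion instantaneously becomes positive, so I expect no genuine obstacle beyond that one bookkeeping point.
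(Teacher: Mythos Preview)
Your proof is correct and follows essentially the same route as the paper's: both use Theorem~\ref{thm:zoom_at_1} together with the Portmanteau theorem and the fact that $\sigma(X_1)U^{(1)}$ almost surely enters $(0,\infty)$ immediately. The only cosmetic difference is that the paper applies Portmanteau directly in path space to the closed set $A=\{f\in D[0,\infty):f(t)\leq0\text{ for all }t\in[0,1)\}$, whereas you first push through the continuous functional $f\mapsto\sup_{t\in[0,1]}f(t)$ and then apply Portmanteau in $\R$; these are equivalent formulations of the same argument.
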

\begin{proof}
	Let $A\subseteq D[0,\infty)$ be the set of functions $f$ in $D[0,\infty)$ with $f(t)\leq0$ for all $t\in[0,1)$. Using \cite[Thm.~16.1]{billingsley} it is easy to verify that $A$ is closed in the Skorokhod topology. It follows from Theorem~\ref{thm:zoom_at_1} and the Portmanteau theorem that
	\begin{equation*}
		\Prob{m^X=1}\leq\limsup_{\epsilon\downarrow0}\Prob{(X^{(\epsilon)}_{-t})_{t\geq0}\in A}\leq\tilde\p((\sigma(X_1)U^{(1)}_t)_{t\geq0}\in A)=0.
	\end{equation*}
\end{proof}

\subsection{Zooming in at the supremum}

The local behavior of $X$ at time $1$ is described by the zooming-in result in Theorem~\ref{thm:zoom_at_1}. In a similar fashion we want to describe the local behavior at the supremum through a zooming-in result. It is well-known (see e.g.\ \cite{bertoin93}) that the negated pre- and post-supremum processes for a Brownian motion are two independent Bessel-3 processes (killed at certain random times). With this in mind the following result is somewhat intuitive.

\begin{theorem}\label{thm:zoom_sup_diffusion}
	Let $B^{(1)}$ and $B^{(2)}$ be two independent Bessel-3 processes defined on an extension of $(\Omega,\ff,\p)$ such that both processes are independent of $\ff$. Then it holds that
	\begin{equation}\label{eq:zoom_sup_diffusion}
		\big((\epsilon^{-1/2}\underleftarrow{X}_{\epsilon t})_{t\geq0},(\epsilon^{-1/2}\underrightarrow{X}_{\epsilon t})_{t\geq0}\big)\stably\big(-\sigma(\overline{X})B^{(1)},-\sigma(\overline{X})B^{(2)}\big)\qquad\text{as }\epsilon\downarrow0.
	\end{equation}
\end{theorem}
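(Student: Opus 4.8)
The plan is to reduce the diffusion case to the known Brownian result via the time-change representation from \S\ref{subsec:representation}, following a strategy parallel to the Lévy-process argument in \cite{iva_zooming}. First I would handle the case $\mu\equiv0$, $x_0=0$, so that $X=(\tilde W_{[X]_t})_{t\geq0}$ a.s.\ for a Brownian motion $\tilde W$, with $[X]_t=\int_0^t\sigma^2(X_s)\idd s$ continuous and strictly increasing, inverse $\tau$. The supremum of $X$ over $[0,1]$ equals the supremum of $\tilde W$ over $[0,[X]_1]$, the supremum times are related through $\tau$, and $\overline{X}=\overline{\tilde W}^{[X]_1}$ where I write $\overline{\tilde W}^s$ for the supremum of $\tilde W$ on $[0,s]$. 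The point is that near the supremum time $m^X$, the clock $[X]$ is approximately linear with slope $\sigma^2(X_{m^X})=\sigma^2(\overline{X})$, since $\sigma$ is continuous and $X$ is continuous; more precisely $\epsilon^{-1}([X]_{m^X+\epsilon t}-[X]_{m^X})\to\sigma^2(\overline{X})t$ locally uniformly in $t$, almost surely. Feeding this into $\underrightarrow{X}_{\epsilon t}=\tilde W_{[X]_{m^X+\epsilon t}}-\overline{\tilde W}^{[X]_{m^X}}$ and using Brownian scaling $\tilde W_{s+\delta u}-\tilde W_s\overset{d}{=}\sqrt{\delta}\,\tilde W^{(\delta)}_u$ with $\delta=\epsilon\sigma^2(\overline{X})$, one expects $\epsilon^{-1/2}\underrightarrow{X}_{\epsilon t}$ to behave like $\sigma(\overline{X})$ times the post-supremum process of a Brownian motion observed at the random level $[X]_1$, and similarly for the pre-supremum part.

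The key steps, in order: (1) Invoke the Brownian zooming-in-at-the-supremum result --- namely that for a Brownian motion on a fixed interval the negated pre- and post-supremum processes, rescaled by $\epsilon^{-1/2}$, converge (stably, with a Bessel-3 limit independent of the relevant $\sigma$-algebra) to two independent Bessel-3 processes; this is exactly the statement underlying \cite{AGP95,iva_zooming} combined with Williams' decomposition \cite{bertoin93}. Since the Brownian motion $\tilde W$ here lives on the extension with filtration $(\hat\G_t)$, I would phrase this as $\hat\G$-stable (indeed mixing) convergence. (2) Transfer through the time change: write the rescaled pre/post processes of $X$ as a continuous functional of (the rescaled pre/post processes of $\tilde W$, the clock increments $\epsilon^{-1}([X]_{m^X+\epsilon\cdot}-[X]_{m^X})$, and $\sigma(\overline{X})$), show the clock-increment term converges in probability to the deterministic map $t\mapsto\sigma^2(\overline{X})|t|$ in the appropriate path space, and apply the continuous-mapping part of stable convergence, Lemma~\ref{lem:stable_properties}(i)--(ii), together with Lemma~\ref{lem:stable_conv_independence} to keep the two sides independent. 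The scaling identity $B\overset{d}{=}c^{-1}\cdot(\text{Bessel-3 of }cB)$ for Bessel-3 processes absorbs the factor $\sigma(\overline{X})$ cleanly. (3) Remove the normalizations: for general $x_0$, translate; for general locally bounded $\mu$ satisfying Assumption~\ref{as:mu_sigma}, use a Girsanov change of measure on $[0,1]$ (the drift coefficient $\mu/\sigma$ is locally bounded on the range of $X$, and $\sigma>0$ there, so the exponential is a genuine martingale), noting that stable convergence and the Bessel-3 limit law are preserved under an equivalent change of measure because the limit is independent of $\ff$ and the Radon--Nikodym density is $\ff$-measurable --- this is where Lemma~\ref{lem:H-stable_to_F-stable} and the definition \eqref{eq:stable_def} do the work.

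The main obstacle is the same backward-in-time difficulty flagged after Theorem~\ref{thm:zoom_at_1}: controlling the pre-supremum process $\underleftarrow{X}$. Two issues compound here. First, one must verify that the clock increments converge not just pointwise but uniformly on compacts on \emph{both} sides of $m^X$, and that the pre-supremum segment of $\tilde W$ (a Bessel-3 run for the random duration $[X]_{m^X}$) genuinely has length bounded away from $0$ so that the $\epsilon\downarrow0$ zoom sees a nondegenerate limit --- this needs $m^X>0$ a.s., which is the analogue of Corollary~\ref{cor:max_not_at_1} and should follow from the same Portmanteau argument applied at time $0$ (or from absolute continuity plus the Brownian fact). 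Second, and more delicate, is ensuring the convergence is \emph{stable} (not merely in distribution) and that the limiting Bessel-3 processes are independent of $\ff$: the supremum time $m^X$ and level $\overline{X}$ are $\ff$-measurable, so the zoom limit must be shown independent of them, which is where the mixing property of the Brownian zooming-in result is essential and must be carefully propagated through the time change and the Girsanov step. I expect the bulk of the technical work --- and the reason this proof is deferred to \S\ref{sec:proofs} --- to be in making the ``continuous functional of the time-changed picture'' argument rigorous in the Skorokhod topology near a random time, uniformly enough to invoke Lemma~\ref{lem:stable_properties}.
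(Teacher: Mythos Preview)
Your overall architecture matches the paper's proof closely: reduce to the driftless case, represent $X$ as a time-changed Brownian motion, invoke the Brownian zooming-in-at-the-supremum result on the stochastic interval $[0,[X]_1]$ (extending from fixed $T$ via Lemma~\ref{lem:H-stable_to_F-stable} and Corollary~\ref{cor:max_not_at_1}), and then transfer back using the locally uniform convergence $\epsilon^{-1}([X]_{m^X+\epsilon t}-[X]_{m^X})\to t\sigma^2(\overline X)$ together with continuity of subordination and the Bessel-3 scaling identity. That part is right, and indeed the ``backward'' difficulty you worry about is absorbed entirely by the Brownian result --- once Lemma~\ref{lem:zooming_BM} is in place the pre- and post-supremum pieces are handled symmetrically by composition, so Lemma~\ref{lem:stable_conv_independence} is not needed here.

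The one real divergence is your step (3). The paper does \emph{not} remove the drift by Girsanov; it uses the scale function $p$ with $p'(x)=\exp\{-2\int_{x_0}^x(\mu/\sigma^2)(u)\idd u\}$ and sets $Y=p(X)$, which solves a driftless SDE with diffusion coefficient $(\sigma p')\circ p^{-1}$. Since $p$ is strictly increasing and $C^1$, suprema and supremum times match, and a mean-value-theorem computation converts the zooming limit for $Y$ directly into the one for $X$ with the correct factor $\sigma(\overline X)$. Your Girsanov route is morally fine, but as written it has a gap: local boundedness of $\mu/\sigma$ on the range of $X$ does \emph{not} by itself make the stochastic exponential a true martingale on $[0,1]$ (Novikov/Kazamaki need a uniform bound, not a pathwise one), so you would have to localize with exit times and then argue that the stable limit is unaffected --- doable, but extra work. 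The scale-function transformation sidesteps this entirely and keeps everything on the original measure, which is why the paper prefers it.
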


The proof of Theorem~\ref{thm:zoom_sup_diffusion} is deferred to \S\ref{sec:proof_zoom_sup_diffusion}.

\subsection{Estimation of the supremum}

As an application of Theorem~\ref{thm:zoom_sup_diffusion} we consider a high-frequency setting in which the process $X$ is observed on the set of times $\epsilon(\N_0+U)\cap[0,1]$ for some small $\epsilon>0$, where $U$ is a standard uniform defined on an extension of the space such that it is independent of $\ff$ and $B^{(1)},B^{(2)}$. The objective is to estimate the supremum $\overline{X}$ over $[0,1]$. To avoid constantly having to intersect with the unit interval we consider $X$ as being restricted to this interval.

We take the basic estimator $M^{(\epsilon)}:=\sup_{t\in\epsilon(\N_0+U)}X_t$. The following result establishes the convergence rate $\epsilon^{-1/2}$.
\begin{proposition}\label{prop:discretization_bounds_U}
	For all $\epsilon>0$ it holds that
	\begin{equation*}
		0\geq\epsilon^{-1/2}(M^{(\epsilon)}-\overline{X})\geq\epsilon^{-1/2}\underrightarrow{X}_{\epsilon\{U-m^X/\epsilon\}},
	\end{equation*}
	where $\{U-m^X/\epsilon\}$ is the fractional part of $U-m^X/\epsilon$.

	Furthermore, there is stable convergence of the lower bound:
	\begin{equation*}
		\epsilon^{-1/2}\underrightarrow{X}_{\epsilon\{U-m^X/\epsilon\}}\stably-\sigma(\overline{X})B^{(2)}_{U}.
	\end{equation*}
\end{proposition}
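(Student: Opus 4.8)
The plan is to establish the deterministic sandwich first and then the stable convergence of the lower bound.

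For the inequalities, observe that the grid $\epsilon(\N_0+U)$ restricted to $[0,1]$ is a finite set of observation times, so $M^{(\epsilon)}=\sup_{t\in\epsilon(\N_0+U)\cap[0,1]}X_t$ is a maximum over a subset of $[0,1]$; hence $M^{(\epsilon)}\leq\overline X$, giving the upper bound $0\geq\epsilon^{-1/2}(M^{(\epsilon)}-\overline X)$. For the lower bound, the key point is that $m^X$ lies between two consecutive grid points: write $m^X=\epsilon(k+U)+\epsilon\{U-m^X/\epsilon\}\cdot(-1)$... more carefully, the grid point just at or after $m^X$ is $m^X+\epsilon r$ where $r=\{m^X/\epsilon-U\}$ if this is interpreted as the distance forward to the next grid point; one checks that the forward distance from $m^X$ to the next point of $\epsilon(\N_0+U)$ equals $\epsilon\{U-m^X/\epsilon\}$ (taking care with the convention for the fractional part, and using Corollary~\ref{cor:max_not_at_1} to know $m^X\neq 1$ so that this next grid point is genuinely available, at least after noting that when the next point would exceed $1$ one instead uses that $\underrightarrow X$ is defined past that point via the restriction convention, or argues directly). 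Then $M^{(\epsilon)}\geq X_{m^X+\epsilon\{U-m^X/\epsilon\}}$, and subtracting $\overline X$ and multiplying by $\epsilon^{-1/2}$ gives exactly $\epsilon^{-1/2}(M^{(\epsilon)}-\overline X)\geq\epsilon^{-1/2}\underrightarrow X_{\epsilon\{U-m^X/\epsilon\}}$.

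For the stable convergence, the idea is to combine Theorem~\ref{thm:zoom_sup_diffusion} with the evaluation map. By Theorem~\ref{thm:zoom_sup_diffusion}, $(\epsilon^{-1/2}\underrightarrow X_{\epsilon t})_{t\geq 0}\stably -\sigma(\overline X)B^{(2)}$ in $D[0,\infty)$. I want to evaluate this process at the random time $T_\epsilon:=\{U-m^X/\epsilon\}\in[0,1)$. Since $U$ is independent of $\ff$ and of $B^{(1)},B^{(2)}$, one shows that $T_\epsilon\stably U$ — indeed, conditionally on $\ff$, $m^X/\epsilon$ is a fixed number and $U$ is uniform, so $\{U-m^X/\epsilon\}$ is again uniform on $[0,1)$ and independent of $\ff$; this should be packaged using Lemma~\ref{lem:stable_conv_independence} or a direct check of the defining property \eqref{eq:stable_def}, noting $T_\epsilon$ has exactly the law of $U$ for every $\epsilon$. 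Combining via Lemma~\ref{lem:stable_properties}(i) (with the rescaled post-supremum process as the stably convergent sequence and $T_\epsilon$ as the sequence converging in probability — or rather, first upgrading to joint stable convergence $\big((\epsilon^{-1/2}\underrightarrow X_{\epsilon t})_{t\geq0},T_\epsilon\big)\stably\big(-\sigma(\overline X)B^{(2)},U\big)$ using the independence structure) and then applying the evaluation map $(f,s)\mapsto f(s)$, which is continuous at $(f,s)$ whenever $f$ is continuous at $s$, yields $\epsilon^{-1/2}\underrightarrow X_{\epsilon T_\epsilon}\stably -\sigma(\overline X)B^{(2)}_U$ via Lemma~\ref{lem:stable_properties}(ii). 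The limit process $-\sigma(\overline X)B^{(2)}$ is almost surely continuous and $U$ is (conditionally, and in fact unconditionally) independent of it with a diffuse law, so the evaluation map is a.s.\ continuous at the limit.

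The main obstacle is the joint stable convergence $\big((\epsilon^{-1/2}\underrightarrow X_{\epsilon t})_{t\geq0},T_\epsilon\big)\stably\big(-\sigma(\overline X)B^{(2)},U\big)$: the sequence $T_\epsilon$ does \emph{not} converge in probability (it oscillates as $\epsilon\downarrow 0$), so Lemma~\ref{lem:stable_properties}(i) does not apply directly, and one cannot simply invoke Lemma~\ref{lem:stable_conv_independence} because $T_\epsilon$ is $\ff$-measurable rather than independent of $\ff$. The right tool is condition (iv) of Lemma~\ref{lem:stable_definitions}: it suffices to check $\big((\epsilon^{-1/2}\underrightarrow X_{\epsilon t})_{t\geq0},T_\epsilon,\ind F\big)\convd\big(-\sigma(\overline X)B^{(2)},U,\ind F\big)$ for $F$ ranging over a generating $\pi$-system of $\ff$, and here one exploits that, conditionally on $\ff$, $T_\epsilon$ is exactly uniform on $[0,1)$ for \emph{every} $\epsilon$ and independent of the extension variables, so the $T_\epsilon$-coordinate contributes the same (conditional) law $U$ at every stage while the first coordinate converges stably by Theorem~\ref{thm:zoom_sup_diffusion}; a conditioning argument then closes the gap. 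Care is also needed at the boundary: when $m^X+\epsilon T_\epsilon>1$ the grid point is not an actual observation time, so one must argue this event has probability tending to zero (or handle it via the $\dagger$-convention on $\underrightarrow X$), using $m^X<1$ a.s.
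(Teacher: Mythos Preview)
Your treatment of the inequalities is essentially the paper's: rewrite the observation grid as $\{m^X+\epsilon\{U-m^X/\epsilon\}+\epsilon j:j\in\Z\}$ (intersected with $[0,1]$) and take $j=0$ for the lower bound. The paper does this via the clean reindexing
\[
\epsilon^{-1/2}(M^{(\epsilon)}-\overline X)=\sup_{i\in\Z}\epsilon^{-1/2}\bigl(X_{\epsilon(i+\{U-m^X/\epsilon\})+m^X}-\overline X\bigr),
\]
which avoids the separate discussion of ``the next grid point after $m^X$''.

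For the stable convergence you have the right ingredients but then trip over them. You correctly observe that, conditionally on $\ff$, the random variable $T_\epsilon=\{U-m^X/\epsilon\}$ is uniform on $[0,1)$ and hence \emph{independent} of $\ff$ (and of $B^{(1)},B^{(2)}$). A few lines later you contradict this by writing ``$T_\epsilon$ is $\ff$-measurable rather than independent of $\ff$''; this is false, and it is precisely what sends you down the unnecessary Lemma~\ref{lem:stable_definitions}(iv) route. The paper exploits the independence directly: since for \emph{every} $\epsilon$ the pair $\bigl((\epsilon^{-1/2}\underrightarrow X_{\epsilon t})_{t\geq0},T_\epsilon\bigr)$ has the same joint law with any $\ff$-measurable $Z$ as $\bigl((\epsilon^{-1/2}\underrightarrow X_{\epsilon t})_{t\geq0},U\bigr)$ (the rescaled process being $\ff$-measurable and $T_\epsilon\overset{d}{=}U$ independent of $\ff$), one may simply replace $T_\epsilon$ by a fixed $U$. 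Equivalently, integrate out the uniform:
\[
\e\bigl[f(\epsilon^{-1/2}\underrightarrow X_{\epsilon T_\epsilon})Z\bigr]
=\int_0^1\e\bigl[f(\epsilon^{-1/2}\underrightarrow X_{\epsilon u})Z\bigr]\,du
\to\int_0^1\tilde\e\bigl[f(-\sigma(\overline X)B^{(2)}_u)Z\bigr]\,du
=\tilde\e\bigl[f(-\sigma(\overline X)B^{(2)}_U)Z\bigr],
\]
using Theorem~\ref{thm:zoom_sup_diffusion}, continuity of evaluation (\cite[Prop.~13.2.1]{whitt}), and dominated convergence. No appeal to Lemma~\ref{lem:stable_conv_independence} (which, as you half-notice, fails here because the \emph{sequence} $(T_{\epsilon_n})_n$ is not independent of $\ff$ even though each $T_\epsilon$ is) and no $\pi$-system argument are needed. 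Your boundary worry about $m^X+\epsilon T_\epsilon>1$ is legitimate but minor: $T_\epsilon<1$ and $m^X<1$ a.s., so this event has probability tending to zero and does not affect the limit.
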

\begin{proof}
	Observe that
	\begin{equation*}
		\epsilon^{-1/2}(M^{(\epsilon)}-\overline{X})=\sup_{i\in\N_0}\epsilon^{-1/2}(X_{\epsilon(i+U)}-\overline{X})=\sup_{i\in\Z}\epsilon^{-1/2}(X_{\epsilon(i+\{U-m^X/\epsilon\})+m^X}-\overline{X})
	\end{equation*}
	for all $\epsilon>0$. We can get a lower bound by taking a specific $i$ instead of taking the supremum over $\Z$. With $i=0$ we get the claimed lower bound.

	By conditioning one sees that for all $\epsilon>0$ the fractional part $U_\epsilon:=\{U-m^X/\epsilon\}$ is a standard uniform independent of $\ff$ and $B^{(1)},B^{(2)}$. In combination with Theorem~\ref{thm:zoom_sup_diffusion} and \cite[Prop.~13.2.1]{whitt} we obtain the convergence of the lower bound.
\end{proof}

\begin{remark}\label{rem:estimation_lower_bound_U}
	\normalfont

	The lower bound in Proposition~\ref{prop:discretization_bounds_U} is somewhat conservative. Indeed, in the proof we see that the discretization error can be written as $\sup_{i\in\Z}\epsilon^{-1/2}(X_{\epsilon(i+\{U-m^X/\epsilon\})+m^X}-\overline{X})$. Looking to Theorem~\ref{thm:zoom_sup_diffusion} it is expected that this quantity will converge to $\sup_{i\in\Z}\hat\xi_{i+U}$, where $\hat\xi_t=-\sigma(\overline{X})B^{(1)}_{-t}$ for $t<0$ and $\hat\xi_t=-\sigma(\overline{X})B^{(2)}_t$ for $t\geq0$. However, this is not straight-forward to prove. The issue is that taking the supremum over an unbounded set of times is not continuous. This was solved in \cite[App.~B]{bis_iva} where the authors corrected the proof of \cite[Thm.~5]{iva_zooming}. In those papers $X$ is a Lévy process satisfying the zooming-in condition. The approach is not directly applicable here because it is based on results known only for Lévy processes.
\end{remark}

It is perfectly valid to ask why we choose to sample at times $\epsilon(i+U)$ rather than $\epsilon i$ for $i\in\N_0$. In the latter case one would consider the estimator $\tilde M^{(\epsilon)}:=\sup_{t\in\epsilon\N_0}X_t$. For this estimator it holds that
\begin{equation*}
	\epsilon^{-1/2}(\tilde M^{(\epsilon)}-\overline{X})=\sup_{i\in\Z}\epsilon^{-1/2}(X_{\epsilon(i+\{-m^X/\epsilon\})+m^X}-\overline{X})
\end{equation*}
for any $\epsilon>0$. This gives the lower bound $\epsilon^{-1/2}\underrightarrow{X}_{\epsilon\{-m^X/\epsilon\}}$. In order to obtain a limit theorem for this quantity we need to know what happens to $\{-m^X/\epsilon\}$ as $\epsilon\downarrow0$. By the classical result of \cite{K37} it is known that $\{-m^X/\epsilon\}$ converges to the standard uniform distribution if $m^X$ has a density wrt.\ the Lebesgue measure. As seen in Proposition~\ref{prop:discretization_bounds_U} we are able to avoid such considerations by translating the sampling times by $\epsilon U$.

\section{Further comments}\label{sec:further_comments}

\subsection{Generality of the results}

Theorem~\ref{thm:zoom_at_1} describes zooming in at time $1$. Naturally there is nothing special about the time $1$ so the result also holds if we zoom in at some other fixed time $T>0$. In that case one simply replaces $\sigma(X_1)$ by $\sigma(X_T)$ in the limit. The time point $1$ is chosen only to simplify notation. 

In the same way there is nothing special about the time interval $[0,1]$ in the formulation of Theorem~\ref{thm:zoom_sup_diffusion}. This interval can be replaced by $[T_1,T_2]$ where $0\leq T_1<T_2<\infty$ are fixed. In the formulation of the result one will then have to define $\overline{X}:=\sup_{t\in[T_1,T_2]}X_t$.

\subsection{Extending to other classes of stochastic processes}

The approach used to prove Theorem~\ref{thm:zoom_at_1} and Theorem~\ref{thm:zoom_sup_diffusion} is based on representing the local martingale part of $X$ as a time-changed Brownian motion. The time-change is differentiable and this lets us apply zooming-in results for the Brownian motion to obtain corresponding results for $X$.

It is possible to extend the result about zooming in at the supremum to other classes of stochastic processes. In \cite{iva_zooming} this was done for any Lévy process satisfying the zooming-in condition \eqref{eq:zooming-in_assumption}. With the approach used to prove Theorem~\ref{thm:zoom_sup_diffusion} it is likely that this result can be used to prove limit results for zooming in at the supremum of time-changed Lévy processes. Below are two examples where this appears to be do-able.
\begin{example}
\normalfont
\begin{enumerate}[label=\bf({\Alph*}),wide,labelwidth=!,labelindent=0pt]
\item[]
\item Let $X$ be a positive $1/\alpha$-self-similar Markov process (pssMp) starting at some value $x>0$. The classical result of \cite{lamperti72} tells us that there exists a Lévy process $\xi$ such that
\begin{equation*}
	X_t=x\exp(\xi_{\tau(tx^{-\alpha})}),\qquad t\geq0,
\end{equation*}
where $\tau(tx^{-\alpha})=\inf\Set{s>0\given \int_0^s\exp(\alpha\xi_u)\idd u\geq tx^{-\alpha}}$. The key point is that $X_t$ is obtained by time-changing a Lévy process and applying a strictly increasing and differentiable function. Note also that the time-change is differentiable. The last ingredient is that $\xi$ must satisfy the zooming-in condition. This is completely characterized in \cite[Thm.~2]{iva_zooming} in terms of the characteristics of $\xi$. Note also that one must pay special attention to a possible jump at the time of supremum.

\item[]

\item Let $X$ be a continuous-state branching process. Then there exists (see e.g.\ \cite[Thm.~10.2]{kyprianou06}) a Lévy process $\zeta$ such that
\begin{equation*}
	X_t=\zeta_{\theta(t)\wedge\tau_0^-},\qquad t\geq0,
\end{equation*}
where $\tau_0^-=\inf\Set{s>0\given \zeta_s<0}$ and $\theta(t)=\inf\Set{s>0\given\int_0^s\zeta_u^{-1}\idd u>t}$. We note that the time-change is not as well-behaved as for the class of pssMps. For example, $t\mapsto\theta(t)$ is not differentiable everywhere. As a consequence one will again have to be particularly aware of any jump at the supremum.
\end{enumerate}
\end{example}

\section{Proofs}\label{sec:proofs}

\subsection{Proof of Theorem~\ref{thm:zoom_at_1}}\label{sec:proof_zoom_at_1}

As in the formulation of Theorem \ref{thm:zoom_at_1} we let $(U^{(1)},U^{(2)})$ denote a pair of independent standard Brownian motions, defined on an extension of $(\Omega,\ff,\p)$ such that they are also independent of $\ff$.

We may write $X_t$ as
\begin{equation*}
	X_t=x_0+A_t+M_t,\qquad t\geq0,
\end{equation*}
where $A$ is a continuous and $(\ff_t)$-adapted process with bounded variation, $M$ is a continuous $(\ff_t)$-local martingale and $A_0=M_0=0$ a.s. We see that
\begin{equation*}
	X^{(\epsilon)}_t=\epsilon^{-1/2}(X_{1+\epsilon t}-X_1)=\epsilon^{-1/2}(A_{1+\epsilon t}-A_1)+\epsilon^{-1/2}(M_{1+\epsilon t}-M_1)
\end{equation*}
for all $t\geq-1/\epsilon$. We treat each term from the right-hand side separately.

Note that $A_t=\int_0^t\mu(X_s)\idd s$ for all $t\geq0$ a.s. Since $\mu$ and $X$ are both locally bounded we immediately find that
\begin{equation*}
	\sup_{t\in[-T,T]}\epsilon^{-1/2}\abs{A_{1+\epsilon t}-A_1}\leq2T\epsilon^{1/2}\sup_{t\in[1-\epsilon T,1+\epsilon T]}\abs{\mu(X_t)}\to0
\end{equation*}
a.s.\ as $\epsilon\downarrow0$ for any $T>0$.

Below in the proof of Theorem~\ref{thm:zoom_sup_diffusion} it is necessary to deal with the drift differently. The same approach could be used here, however it is the author's belief that the calculation above is more illustrative since it clearly shows that the drift vanishes due to the $\epsilon^{-1/2}$ scaling.

It remains to show that
\begin{equation}\label{eq:zoom_M_at_1}
	\big((\epsilon^{-1/2}(M_{1-\epsilon t}-M_1))_{t\geq0},(\epsilon^{-1/2}(M_{1+\epsilon t}-M_1))_{t\geq0}\big)\stably\big(\sigma(X_1)U^{(1)},\sigma(X_1)U^{(2)}\big).
\end{equation}
To do so we will represent $M$ as a time-changed Brownian motion. Let $(\ff^M_t)$ denote the completed natural filtration generated by $M$, let $\tau$ denote the inverse of $[M]$, and define $\G^M_t=\ff^M_{\tau_t}$. Now, as in \S\ref{subsec:representation} a standard result gives the existence of a Brownian motion $\tilde W$ with respect to a standard extension $(\hat\G^M_t)$ of $(\G^M_t)$ such that $M=(\tilde W_{[M]_t})_{t\geq0}$ a.s. Recall that $[M]_s$ is a $(\G^M_t)$-stopping time for any $s\geq0$. Finally we note that the quadratic variation of $M$ is given by
\begin{equation*}
	[M]_t=[X]_t=\int_0^t \sigma^2(X_s)\idd s,\qquad t\geq0
\end{equation*}
almost surely.

The next step in the proof of Theorem~\ref{thm:zoom_at_1} is Lemma~\ref{lem:zoom_in_from_right} below which allows for zooming in on $\tilde W$ from the right. Instead of simply zooming in at time $1$ we generalize to zooming in at $1-\epsilon R$ with $R\geq0$ since we will need this in the proof of Lemma~\ref{lem:zoom_BM_stopping_time} below. This slight generalization requires very little extra effort.

\begin{lemma}\label{lem:zoom_in_from_right}
	For any $R\geq0$ it holds that
	\begin{equation}\label{eq:zoom_in_from_right_0}
		(\epsilon^{-1/2}(\tilde W_{[M]_{1-\epsilon R}+\epsilon t}-\tilde W_{[M]_{1-\epsilon R}}))_{t\geq0}\stably U,
	\end{equation}
	where $U$ is a standard Brownian motion defined on an extension of $(\Omega,\ff,\p)$ such that $U$ is independent of $\ff$.
\end{lemma}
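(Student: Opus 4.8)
The plan is to reduce the statement to the scaling property of Brownian motion together with the fact that $[M]_{1-\epsilon R}$ is a stopping time with respect to a filtration for which $\tilde W$ is a Brownian motion, and then upgrade the resulting mixing convergence (which is automatic for increments after a stopping time) to the asserted stable convergence. First I would fix $R\geq0$ and write $S_\epsilon:=[M]_{1-\epsilon R}$, which is a $(\G^M_t)$-stopping time (hence also a stopping time for the standard extension $(\hat\G^M_t)$). For each $\epsilon>0$ the process $t\mapsto \tilde W_{S_\epsilon+t}-\tilde W_{S_\epsilon}$ is, by the strong Markov property of Brownian motion, again a standard Brownian motion, and crucially it is independent of $\hat\G^M_{S_\epsilon}$. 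Brownian scaling then gives that $W^{(\epsilon)}_t:=\epsilon^{-1/2}(\tilde W_{S_\epsilon+\epsilon t}-\tilde W_{S_\epsilon})$ is, for every $\epsilon>0$, a standard Brownian motion on $[0,\infty)$, so the left-hand side of \eqref{eq:zoom_in_from_right_0} has a law that does not even depend on $\epsilon$: it is exactly the Wiener measure. Thus ordinary convergence in distribution to a standard Brownian motion $U$ is trivial; the content of the lemma is that the convergence is stable with respect to all of $\ff$.

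For the stability, I would argue as follows. It suffices, by Lemma~\ref{lem:stable_definitions}(iii), to show $(W^{(\epsilon)},Y)\convd(U,Y)$ for every $\ff$-measurable $Y$ valued in a Polish space, with $U$ independent of $\ff$. The natural route is to first establish $\G^M_\infty$-stable convergence — or rather convergence stable with respect to $\ff^M_\infty=\bigvee_t\ff^M_t$, the information generated by $M$ itself — and then invoke Lemma~\ref{lem:H-stable_to_F-stable}. But here one must be slightly careful: $W^{(\epsilon)}$ is not $\ff^M_\infty$-measurable (the increments of $\tilde W$ beyond time $[M]_1$ are extra randomness), so Lemma~\ref{lem:H-stable_to_F-stable} does not apply verbatim. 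The clean fix is to condition directly: for bounded continuous $f$ on $D[0,\infty)$ and bounded $\ff$-measurable $Z$, write $\e[f(W^{(\epsilon)})Z]=\e[\e[f(W^{(\epsilon)})\mid\hat\G^M_{S_\epsilon}]\,\e[Z\mid\hat\G^M_{S_\epsilon}]]$ — wait, this is not quite right either since $Z$ need not be independent of the post-$S_\epsilon$ increments only conditionally. More precisely, since $W^{(\epsilon)}$ is independent of $\hat\G^M_{S_\epsilon}$ with a law $\nu$ (Wiener measure) not depending on $\epsilon$, we have $\e[f(W^{(\epsilon)})\mid\hat\G^M_{S_\epsilon}]=\int f\,d\nu=\tilde\e[f(U)]$ a.s.\ for every $\epsilon$; however $Z$ is $\ff$-measurable, not $\hat\G^M_{S_\epsilon}$-measurable, so one cannot simply pull it out.

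The resolution — and this is the step I expect to be the main obstacle — is that $\ff$ may contain information that is \emph{not} independent of the post-$S_\epsilon$ increments of $\tilde W$, because the original Brownian motion $W$ driving the SDE, and hence $\ff$, can be correlated with $\tilde W$ on the time interval corresponding to $[M]_{1-\epsilon R}\leq u$. To handle this honestly I would localize: for any $\delta>0$, on the event $\{S_\epsilon\leq s_0\}$ for suitable deterministic $s_0$, and restricting attention to $W^{(\epsilon)}$ on a compact time interval $[0,T]$ so that only $\tilde W$ on $[S_\epsilon, S_\epsilon+\epsilon T]\subseteq[0,s_0+\epsilon T]$ is involved, I would use that $S_\epsilon=[M]_{1-\epsilon R}\to[M]_1$ a.s.\ and that the increment $\tilde W_{S_\epsilon+\epsilon\,\cdot}-\tilde W_{S_\epsilon}$ over a vanishing time window becomes asymptotically independent of $\ff$ after rescaling — this is precisely the heuristic that justifies mixing convergence, and it is made rigorous by combining the independence of the strong-Markov increment from $\hat\G^M_{S_\epsilon}$ with an approximation of arbitrary $Z\in\ff$ by $\hat\G^M_{S_\epsilon}$-measurable (or $\ff_{\tau_{S_\epsilon}}$-measurable) random variables, using that $\hat\G^M_{S_\epsilon}\uparrow$ captures $\ff^M_1$ and then appealing to the structure of the SDE to relate $\ff$ to $\ff_1$. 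Concretely, I would show $\e[f(W^{(\epsilon)})Z]-\tilde\e[f(U)]\e[Z]\to0$ by splitting $Z=\e[Z\mid\ff_{\tau_{S_\epsilon}}] + (Z-\e[Z\mid\ff_{\tau_{S_\epsilon}}])$; the first piece is $\hat\G^M_{S_\epsilon}$-measurable-ish so the independence of $W^{(\epsilon)}$ kills the cross term exactly, and for the second piece one uses boundedness of $f$ together with an $L^1$ or $L^2$ bound on the residual, which vanishes as $\epsilon\downarrow0$ because $\tau_{S_\epsilon}=1-\epsilon R\to1$ and $\e[Z\mid\ff_{1-\epsilon R}]\to\e[Z\mid\ff_{1-}]$. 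The remaining gap — passing from $\ff_{1-}$ to all of $\ff$ — is exactly the place where the genuine work sits, and it is handled by noting that conditionally on $\ff_{1-}$ the rescaled future increment is asymptotically a fresh Brownian motion independent of $\ff$, which is the content one would extract from a martingale functional CLT applied to the increments of $\tilde W$; alternatively, and more in the spirit of the paper, one observes that $W^{(\epsilon)}$ depends on $\ff$ only through $S_\epsilon$, which converges a.s.\ to the $\ff$-measurable constant $[M]_1$, so a continuous-mapping-type argument together with the independence of Brownian increments from their starting $\sigma$-algebra delivers the full $\ff$-stable convergence. I would phrase the final write-up using Lemma~\ref{lem:stable_definitions}(iv) with $\E$ a generating $\pi$-system of $\ff$ on which the cross terms can be computed explicitly.
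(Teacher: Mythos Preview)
Your setup is correct: $S_\epsilon:=[M]_{1-\epsilon R}$ is a $(\hat\G^M_t)$-stopping time, so by the strong Markov property and Brownian scaling the process $W^{(\epsilon)}$ is an exact standard Brownian motion for every $\epsilon>0$; convergence in law is trivial. The difficulty is entirely in upgrading to $\ff$-stable convergence, and here your argument does not close.

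The gap you flag yourself is real, and neither of your two proposed fixes works. First, the conditioning split $Z=\e[Z\mid\ff_{1-\epsilon R}]+(Z-\e[Z\mid\ff_{1-\epsilon R}])$ does not do what you want: the independence you have is of $W^{(\epsilon)}$ from $\hat\G^M_{S_\epsilon}$, which contains $\ff^M_{1-\epsilon R}$ (the filtration of $M$) but in general \emph{not} $\ff_{1-\epsilon R}$, so the ``first piece'' is not killed by independence. Second, the remark that ``$W^{(\epsilon)}$ depends on $\ff$ only through $S_\epsilon$'' is false as stated --- $W^{(\epsilon)}$ is built from the post-$S_\epsilon$ increments of $\tilde W$, not merely from the value $S_\epsilon$ --- and the appeal to a martingale functional CLT is not an argument here. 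As a result you never produce a sub-$\sigma$-algebra $\H$ that simultaneously (a) makes each $W^{(\epsilon)}$ measurable and (b) admits a direct verification of $\H$-stable convergence, which is exactly what Lemma~\ref{lem:H-stable_to_F-stable} requires.

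The paper resolves this by choosing $\H:=\sigma(\ff^M_1\cup\mathcal A)$, where $\mathcal A$ is generated by the post-$[M]_1$ increments $\tilde W':=(\tilde W_{[M]_1+t}-\tilde W_{[M]_1})_{t\geq0}$. This $\H$ is large enough that $W^{(\epsilon)}$ is $\H$-measurable (it uses $M$ on $[0,1]$ and increments of $\tilde W$ after $[M]_{1-\epsilon R}$), so Lemma~\ref{lem:H-stable_to_F-stable} applies once $\H$-stability is shown. The $\mathcal A$-stable part is the clever step: for fixed $T>0$ and times $0<t_1<\dotsb<t_k$ one writes $\tilde W'_{t_i}=a_i^{(\epsilon)}+b_i^{(\epsilon)}$ with $a_i^{(\epsilon)}:=\tilde W_{[M]_1+t_i}-\tilde W_{[M]_1+\epsilon T}$ independent of $(W^{(\epsilon)}_t)_{t\in[0,T]}$ for $\epsilon<t_1/T$, and $b_i^{(\epsilon)}\to0$ a.s.; this gives joint convergence with the finite-dimensional distributions of $\tilde W'$. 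The $\ff^M_1$-part is then added via $\ff^M_1=\sigma\bigl(\bigcup_{\delta>0}\ff^M_{1-\delta}\bigr)$ and the observation that for $F\in\ff^M_{1-\delta}$ the pair $\bigl(W^{(\epsilon)},\ind A\bigr)$ is independent of $\ind F$ once $\epsilon R<\delta$. Lemma~\ref{lem:stable_definitions}(iv) concludes.
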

\begin{proof}
	We fix $R\geq0$ and recall that $[M]_{1-\epsilon R}$ is a $(\G^M_t)$-stopping time. It follows that the left-hand side of \eqref{eq:zoom_in_from_right_0} is a standard Brownian motion for any $\epsilon>0$ so the convergence in distribution is trivial.

	Now, let $\mathcal{A}$ denote the $\sigma$-algebra generated by the process $\tilde W':=(\tilde W_{[M]_1+t}-\tilde W_{[M]_1})_{t\geq0}$. The first step is proving $\mathcal{A}$-stable convergence. It is sufficient to show that
	\begin{equation}\label{eq:zoom_in_from_right_1}
		\big((\epsilon^{-1/2}(\tilde W_{[M]_{1-\epsilon R}+\epsilon t}-\tilde W_{[M]_{1-\epsilon R}}))_{t\in[0,T]},(\tilde W'_{t_i})_{i=1,\dotsc,k}\big)\convd \big((U_t)_{t\in[0,T]},(\tilde W'_{t_i})_{i=1,\dotsc,k}\big)
	\end{equation}
	for any $T>0$, $k\in\N$ and $0<t_1<\dotsc<t_k$. To this end define $a_i^{(\epsilon)}:=\tilde W_{[M]_1+t_i}-\tilde W_{[M]_1+\epsilon T}$ and $b_i^{(\epsilon)}:=\tilde W_{[M]_1+\epsilon T}-\tilde W_{[M]_1}$. Then $\tilde W'_{t_i}=a_i^{(\epsilon)}+b_i^{(\epsilon)}$, $b_i^{(\epsilon)}\to0$ a.s.\ as $\epsilon\downarrow0$, and for $\epsilon\in(0,t_1/T)$ we see that $a_i^{(\epsilon)}$ is independent of $(\tilde W_t)_{t\in[0,[M]_{1-\epsilon R}+\epsilon T]}$. Hence,
	\begin{equation*}
		\big((\epsilon^{-1/2}(\tilde W_{[M]_{1-\epsilon R}+\epsilon t}-\tilde W_{[M]_{1-\epsilon R}}))_{t\in[0,T]},(a_i^{(\epsilon)})_{i=1,\dotsc,k}\big)\convd \big((U_t)_{t\in[0,T]},(\tilde W'_{t_i})_{i=1,\dotsc,k}\big).
	\end{equation*}
	The convergence in \eqref{eq:zoom_in_from_right_1} follows immediately. This establishes \eqref{eq:zoom_in_from_right_0} with $\stably$ replaced by $\overset{\mathcal{A}-st}{\rightarrow}$.

	We let $\H:=\sigma(\G^M_{[M]_1}\cup\mathcal{A})=\sigma(\ff^M_1\cup\mathcal{A})$ and note that the left-hand side in \eqref{eq:zoom_in_from_right_0} is $\H$-measurable. Thus, proving $\H$-stable convergence automatically yields $\ff$-stable convergence by Lemma~\ref{lem:H-stable_to_F-stable}. We note that $\ff^M_1=\sigma(\bigcup_{\delta>0}\ff^M_{1-\delta})$ since $M(\omega)$ is continuous for all $\omega\in\Omega$ (recall the considerations in the beginning of \S\ref{sec:setup}). According to Lemma~\ref{lem:stable_definitions} it is sufficient to show that
	\begin{equation*}
		\big((\epsilon^{-1/2}(\tilde W_{[M]_{1-\epsilon R}+\epsilon t}-\tilde W_{[M]_{1-\epsilon R}}))_{t\geq0},\ind{A},\ind{F}\big)\convd(U,\ind{A},\ind{F})
	\end{equation*}
	for any $\delta>0$, $F\in\ff^M_{1-\delta}$ and $A\in\mathcal{A}$. Since the first two components on the left-hand side are independent of $\ind{F}$ for small enough $\epsilon$ this is a trivial consequence of the $\mathcal{A}$-stable convergence. This concludes the proof.
\end{proof}

We proceed by proving the following lemma, stating that we can zoom in on $\tilde W$ at time $[M]_1$. The proof follows the same strategy as the proof of \cite[Thm.~3]{discretization}.

\begin{lemma}\label{lem:zoom_BM_stopping_time}
	As $\epsilon\downarrow0$ it holds that
	\begin{equation}\label{eq:zoom_BM_stopping_time_0}
		\big((\tilde W^{(\epsilon)}_{-t})_{t\geq0},(\tilde W^{(\epsilon)}_t)_{t\geq0}\big)\stably(U^{(1)},U^{(2)}),
	\end{equation}
	where $\tilde W^{(\epsilon)}_t:=\epsilon^{-1/2}(\tilde W_{[M]_1+\epsilon t}-\tilde W_{[M]_1})$.
\end{lemma}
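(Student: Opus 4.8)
The plan is to read off both the backward and the forward part from Lemma~\ref{lem:zoom_in_from_right} simultaneously, using the parameter $R$ to reach a short way back before the stopping time $[M]_1$. Fix $R\geq0$ and set
\[
	c_\epsilon:=\epsilon^{-1}\big([M]_1-[M]_{1-\epsilon R}\big)=\epsilon^{-1}\int_{1-\epsilon R}^1\sigma^2(X_s)\idd s,\qquad V^{(\epsilon)}_s:=\epsilon^{-1/2}\big(\tilde W_{[M]_{1-\epsilon R}+\epsilon s}-\tilde W_{[M]_{1-\epsilon R}}\big),\quad s\geq0.
\]
Continuity of $s\mapsto\sigma^2(X_s)$ at $s=1$ gives $c_\epsilon\to c:=R\sigma^2(X_1)$ almost surely, and since $[M]_{1-\epsilon R}+\epsilon c_\epsilon=[M]_1$ one checks directly the identities $\tilde W^{(\epsilon)}_t=V^{(\epsilon)}_{c_\epsilon+t}-V^{(\epsilon)}_{c_\epsilon}$ for $t\geq0$ and $\tilde W^{(\epsilon)}_{-t}=V^{(\epsilon)}_{c_\epsilon-t}-V^{(\epsilon)}_{c_\epsilon}$ for $0\leq t\leq c_\epsilon$. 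In particular the forward part is exactly the $R=0$ instance of Lemma~\ref{lem:zoom_in_from_right}; the role of $R>0$ is to place the backward window, of clock-length $\epsilon c_\epsilon$ and hence of order $\epsilon$, inside the window on which Lemma~\ref{lem:zoom_in_from_right} is available.

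By Lemma~\ref{lem:zoom_in_from_right} we have $V^{(\epsilon)}\stably U$ for a standard Brownian motion $U$ independent of $\ff$, and since $c_\epsilon\to c$ a.s.\ with $c$ being $\ff$-measurable, Lemma~\ref{lem:stable_properties}(i) gives $(V^{(\epsilon)},c_\epsilon)\stably(U,c)$. I would then apply Lemma~\ref{lem:stable_properties}(ii) to $\Phi(w,a):=\big((w_{(a-t)\vee0}-w_a)_{t\geq0},(w_{a+t}-w_a)_{t\geq0}\big)$, which is continuous at every $(w,a)$ with $w\in C[0,\infty)$ and hence almost surely continuous at $(U,c)$, to obtain
\[
	\big((V^{(\epsilon)}_{(c_\epsilon-t)\vee0}-V^{(\epsilon)}_{c_\epsilon})_{t\geq0},(\tilde W^{(\epsilon)}_t)_{t\geq0}\big)\stably\big((U_{(c-t)\vee0}-U_c)_{t\geq0},(U_{c+t}-U_c)_{t\geq0}\big).
\]
Conditionally on $\ff$, the reversed path $(U_{c-t}-U_c)_{0\leq t\leq c}$ and the post-$c$ increments $(U_{c+t}-U_c)_{t\geq0}$ are independent standard Brownian motions, so on the $\ff$-measurable event $\{c>T\}$ the restriction of the right-hand side to $[0,T]$ has the same law, jointly with $\ff$, as $\big((U^{(1)}_t)_{t\in[0,T]},(U^{(2)}_t)_{t\in[0,T]}\big)$.

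It then remains to remove the truncation and let $R\to\infty$. Fix $T>0$, a bounded continuous $f$ factoring through the restriction to $[0,T]$, and a bounded $\ff$-measurable $Z$; the truncated backward process $\tilde W^{(\epsilon),R}_{-t}:=V^{(\epsilon)}_{(c_\epsilon-t)\vee0}-V^{(\epsilon)}_{c_\epsilon}$ agrees with $\tilde W^{(\epsilon)}_{-t}$ on $[0,c_\epsilon]$, so the two differ under $f$ by at most $2\norm{f}_\infty\norm{Z}_\infty\Prob{c_\epsilon\leq T}$, and $\Prob{c_\epsilon\leq T}\to\Prob{c\leq T}$ by $c_\epsilon\to c$ a.s.\ (choosing $R$ outside the at most countable set of values with $\p(\sigma^2(X_1)=T/R)>0$). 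The convergence established above controls $\e[f(\tilde W^{(\epsilon),R}_{-\cdot},\tilde W^{(\epsilon)}_\cdot)Z]$, its limit differs from $\tilde\e[f(U^{(1)},U^{(2)})Z]$ by at most $2\norm{f}_\infty\norm{Z}_\infty\Prob{c\leq T}$ by the previous paragraph, and $\Prob{c\leq T}=\Prob{\sigma^2(X_1)\leq T/R}\to0$ as $R\to\infty$ since $\sigma^2(X_1)>0$ a.s.\ by Assumption~\ref{as:mu_sigma}(ii); combining these bounds and passing first to $\limsup_{\epsilon\downarrow0}$ and then to $R\to\infty$ yields stable convergence of the $[0,T]$-restrictions, which upgrades to \eqref{eq:zoom_BM_stopping_time_0} in the usual way. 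I expect this last passage to be the main obstacle: because $\sigma^2(X_1)$ need not be bounded away from $0$, no finite $R$ makes $\{c>T\}$ a.s., which forces the $R\to\infty$ approximation and careful bookkeeping of the almost sure convergence $c_\epsilon\to c$ against the possibly atomic law of $\sigma^2(X_1)$; the algebraic identities for $\tilde W^{(\epsilon)}$, the continuity of $\Phi$ on $C[0,\infty)\times[0,\infty)$, and the Brownian time-reversal are all routine.
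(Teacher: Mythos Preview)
Your argument is correct and follows the paper's strategy closely: both proofs use Lemma~\ref{lem:zoom_in_from_right} with parameter $R>0$, the a.s.\ convergence $\epsilon^{-1}([M]_1-[M]_{1-\epsilon R})\to R\sigma^2(X_1)$, and a final $R\to\infty$ passage to absorb the event $\{R\sigma^2(X_1)\leq T\}$. The only real difference is in how the joint convergence of the forward and backward pieces is obtained. The paper observes that $(\tilde W^{(\epsilon)}_{-t})_{t\geq0}$ and $(\tilde W^{(\epsilon)}_t)_{t\geq0}$ are independent for each $\epsilon>0$ and invokes Lemma~\ref{lem:stable_conv_independence}, so it only needs to treat the backward part in isolation; the identification of the backward limit $-(U_s-U_{s-t})$ with $U^{(1)}$ is then a one-line distributional remark. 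You instead push both pieces through a single continuous map $\Phi$ of $V^{(\epsilon)}$ and identify the joint limit via Brownian time-reversal and independence of increments conditionally on $\ff$. This buys you a proof that does not rely on Lemma~\ref{lem:stable_conv_independence}, at the cost of the extra (routine) conditional-law computation. Incidentally, the atom-avoidance for $R$ is unnecessary: since $c_\epsilon\to c$ a.s., Portmanteau already gives $\limsup_{\epsilon\downarrow0}\Prob{c_\epsilon\leq T}\leq\Prob{c\leq T}$, which is all the final estimate needs.
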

\begin{proof}
	There are two immediate things to note. Firstly, the convergence $(\tilde W^{(\epsilon)}_t)_{t\geq0}\stably U^{(2)}$ is nothing more than the case $R=0$ in Lemma~\ref{lem:zoom_in_from_right}. Secondly, since $(\tilde W^{(\epsilon)}_{-t})_{t\geq0}$ and $(\tilde W^{(\epsilon)}_t)_{t\geq0}$ are independent for all $\epsilon>0$ it is sufficient, according to Lemma~\ref{lem:stable_conv_independence}, to show that the former converges stably to $U^{(1)}$. Again it is sufficient to show stable convergence of the process restricted to the time interval $[0,T]$ for all $T>0$.

	For any $R\geq0$ we have the almost sure convergence $\epsilon^{-1}([M]_1-[M]_{1-\epsilon R})\to R\sigma^2(X_1)=:s$. Given $T>0$ we can pick $R$ such that $s>T$ with probability arbitrarily close to 1. With $Y^{(\epsilon)}_t=\epsilon^{-1/2}(\tilde W_{[M]_{1-\epsilon R}+\epsilon t}-\tilde W_{[M]_{1-\epsilon R}})$ we then write
	\begin{equation}\label{eq:zoom_BM_stopping_time_1}
		\epsilon^{-1/2}(\tilde W_{[M]_1-\epsilon t}-\tilde W_{[M]_1})=-(Y^{(\epsilon)}_{\epsilon^{-1}([M]_1-[M]_{1-\epsilon R})}-Y^{(\epsilon)}_{\epsilon^{-1}([M]_1-[M]_{1-\epsilon R}-\epsilon t)}).
	\end{equation}
	That is, on $\Set{s>T}$ the increment of $\epsilon^{-1/2}\tilde W$ over $[[M]_1-\epsilon t,[M]_1]$ can be viewed as the increment of $Y^{(\epsilon)}$ over $[\epsilon^{-1}([M]_1-[M]_{1-\epsilon R}-\epsilon t),\epsilon^{-1}([M]_1-[M]_{1-\epsilon R})]$ (for small enough $\epsilon>0$).

	Almost surely $\epsilon^{-1}([M]_1-[M]_{1-\epsilon R}-\epsilon t)\to s-t$ uniformly for $t\in[0,T]$. By combining this with \eqref{eq:zoom_BM_stopping_time_1}, Lemma \ref{lem:zoom_in_from_right}, continuity of subordination (see \cite[Thm.~13.2.2]{whitt}) and Lemma \ref{lem:stable_properties} we find that
	\begin{equation}\label{eq:zoom_BM_stopping_time_2}
		\Mean{\ind{\Set{s>T}}f((\tilde W^{(\epsilon)}_{-t})_{t\in[0,T]})Z}\to\tilde\e[\ind{\Set{s>T}}f(-(U_s-U_{s-t})_{t\in[0,T]})Z]
	\end{equation}
	for all bounded continuous $f$ and all bounded $\ff$-measurable $Z$, where $U$ is a standard Brownian motion defined on an extension of $(\Omega,\ff,\p)$ such that $U$ is independent of $\ff$ and independent of $U^{(2)}$. We conclude by noting that the limit in \eqref{eq:zoom_BM_stopping_time_2} is equal to $\tilde\e[\ind{\Set{s>T}}f((U^{(1)}_t)_{t\in[0,T]})Z]$, where $U^{(1)}$ is a standard Brownian motion defined on an extension of $(\Omega,\ff,\p)$, independent of $\ff$ and independent of $U^{(2)}$.
\end{proof}

We are now ready to finish the proof of Theorem~\ref{thm:zoom_at_1} which we have reduced to proving the convergence
\begin{equation*}
	\big((M^{(\epsilon)}_{-t})_{t\geq0},(M^{(\epsilon)}_t)_{t\geq0}\big)\stably\big(\sigma(X_1)U^{(1)},\sigma(X_1)U^{(2)}\big),
\end{equation*}
where $M^{(\epsilon)}_t:=\epsilon^{-1/2}(M_{1+\epsilon t}-M_1)$.

Firstly, we have the almost sure convergence
\begin{equation*}
	\sigma_\epsilon^2(t):=\epsilon^{-1}([M]_{1+\epsilon t}-[M]_1)\to t\sigma^2(X_1).
\end{equation*}
This convergence is uniform in $t$ over compact intervals so we get the a.s.\ functional convergence
\begin{equation*}
	\big((\sigma_\epsilon^2(-t))_{t\geq0},(\sigma_\epsilon^2(t))_{t\geq0}\big)\to\big((-t\sigma^2(X_1))_{t\geq0},(t\sigma^2(X_1))_{t\geq0}\big),
\end{equation*}
which we may add to the stable convergence in \eqref{eq:zoom_BM_stopping_time_0}.

Now, for $t\in\R$ we can write
\begin{equation*}
	M^{(\epsilon)}_t=\epsilon^{-1/2}(M_{1+\epsilon t}-M_1)=\epsilon^{-1/2}(\tilde W_{[M]_{1+\epsilon t}}-\tilde W_{[M]_1})=\tilde W^{(\epsilon)}_{\sigma_\epsilon^2(t)},
\end{equation*}
where $\tilde W^{(\epsilon)}$ is defined in Lemma~\ref{lem:zoom_BM_stopping_time}. 
By piecing the above together we obtain the convergence
\begin{equation*}
	\big((M^{(\epsilon)}_{-t})_{t\geq0},(M^{(\epsilon)}_t)_{t\geq0}\big)\stably\big((U^{(1)}_{t\sigma^2(X_1)})_{t\geq0},(U^{(2)}_{t\sigma^2(X_1)})_{t\geq0}\big)=\big(\sigma(X_1)\tilde U^{(1)},\sigma(X_1)\tilde U^{(2)}\big),
\end{equation*}
where $\tilde U^{(i)}_t:=\sigma^{-1}(X_1)U^{(i)}_{t\sigma^2(X_1)}$. Again we use continuity of subordination (see \cite[Thm.~13.2.2]{whitt}). We conclude by remarking that $(\tilde U^{(1)},\tilde U^{(2)})$ is again a pair of independent standard Brownian motions, also independent of $\ff$.

\subsection{Proof of Theorem~\ref{thm:zoom_sup_diffusion}}\label{sec:proof_zoom_sup_diffusion}

We begin by establishing that we may assume that $X$ starts at zero and has no drift. As in Theorem \ref{thm:zoom_sup_diffusion} $(B^{(1)},B^{(2)})$ denotes a pair of independent Bessel-3 processes, defined on an extension of $(\Omega,\ff,\p)$ such that they are also independent of $\ff$.

Following \cite[Ch.~33]{kallenberg3} we let $p$ be the function given by
\begin{equation*}
	p'(x)=\exp\left\{-2\int_{x_0}^x(\mu/\sigma^2)(u)\idd u\right\}\qquad\text{and}\qquad p(x_0)=0.
\end{equation*}
Note that this definition of $p$ has a problem at a value $x$ if the function $\mu/\sigma^2$ is not integrable over the interval $[x_0,x]$ (or $[x,x_0]$ depending on which is larger). However, if $x$ is in the range of $X$ then $\mu/\sigma^2$ is bounded on $[x_0,x]$ (or $[x,x_0]$) due to Assumption \eqref{as:mu_sigma}. As we will only need to evaluate $p$ at such points we need not worry.

Now, let $Y_t:=p(X_t)$ for $t\geq0$. The choice of $p$ has two particularly useful implications. Firstly, $p$ is strictly increasing so $\overline{Y}=p(\overline{X})$ and $m^X=m^Y$. Secondly, $Y$ is a diffusion process solving the SDE
\begin{equation}\label{eq:SDE_Y}
	\dd Y_t=\tilde\sigma(Y_t)\dd W_t\qquad\text{and}\qquad Y_0=0,
\end{equation}
where $\tilde\sigma=(\sigma p')\circ p^{-1}$.

Now we are able to prove the following lemma which is an essential step in proving Theorem~\ref{thm:zoom_sup_diffusion}.

\begin{lemma}\label{lem:nodrift}
	It is sufficient to prove Theorem \ref{thm:zoom_sup_diffusion} under the assumption that $x_0=0$ and $\mu\equiv0$.
\end{lemma}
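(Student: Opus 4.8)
The plan is to show that the full statement of Theorem~\ref{thm:zoom_sup_diffusion} for $X$ follows from the corresponding statement for $Y=p(X)$, which solves the driftless SDE \eqref{eq:SDE_Y} with $Y_0=0$. So I will assume the theorem holds for all diffusions of this restricted type and deduce it for general $X$. The link between the two is the deterministic, strictly increasing, $C^1$ map $p$ (well-defined on the range of $X$, as already noted), together with the first-order Taylor expansion of $p$ near $\overline{X}$.

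First I would record the algebraic identities coming from $p$ being a strictly increasing bijection on the relevant range: $m^X=m^Y$, $\overline{Y}=p(\overline{X})$, and hence
\begin{equation*}
	\underleftarrow{Y}_t=p(X_{m^X-t})-p(\overline{X}),\qquad \underrightarrow{Y}_t=p(X_{m^X+t})-p(\overline{X})
\end{equation*}
on the respective (identical) lifetime intervals. Next, since $p$ is differentiable at $\overline{X}$ with $p'(\overline{X})>0$, for any $x$ near $\overline{X}$ we have $p(x)-p(\overline{X})=p'(\overline{X})(x-\overline{X})+o(x-\overline{X})$. Applying this with $x=X_{m^X\pm\epsilon t}$ and using that $X$ is continuous (so $X_{m^X\pm\epsilon t}\to\overline{X}$ uniformly for $t$ in compacts as $\epsilon\downarrow0$) gives, almost surely and uniformly on compact time sets,
\begin{equation*}
	\epsilon^{-1/2}\underleftarrow{Y}_{\epsilon t}=p'(\overline{X})\,\epsilon^{-1/2}\underleftarrow{X}_{\epsilon t}+o\big(\epsilon^{-1/2}\underleftarrow{X}_{\epsilon t}\big),
\end{equation*}
and likewise for $\underrightarrow{X}$. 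I would make the error term precise by writing $p(x)-p(\overline X)=(x-\overline X)\rho(x)$ with $\rho$ continuous at $\overline X$ and $\rho(\overline X)=p'(\overline X)$, so that $\epsilon^{-1/2}\underleftarrow Y_{\epsilon t}=\rho(X_{m^X-\epsilon t})\,\epsilon^{-1/2}\underleftarrow X_{\epsilon t}$, with $\rho(X_{m^X-\epsilon t})\to p'(\overline X)$ a.s.\ uniformly on compacts; the analogous statement holds on the post-supremum side. Thus the $X$-zoom-in processes are obtained from the $Y$-zoom-in processes by multiplication by a sequence of (random, time-dependent) factors converging almost surely, uniformly on compacts, to the constant $1/p'(\overline{X})$.

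Granting the theorem for $Y$, we have
\begin{equation*}
	\big((\epsilon^{-1/2}\underleftarrow{Y}_{\epsilon t})_{t\geq0},(\epsilon^{-1/2}\underrightarrow{Y}_{\epsilon t})_{t\geq0}\big)\stably\big(-\tilde\sigma(\overline{Y})B^{(1)},-\tilde\sigma(\overline{Y})B^{(2)}\big).
\end{equation*}
I would combine this stable convergence with the a.s.\ uniform-on-compacts convergence of the multiplicative factors $\rho(X_{m^X\mp\epsilon t})$ to $p'(\overline X)$: by Lemma~\ref{lem:stable_properties}(i) we may append the (deterministic-limit, hence convergence-in-probability) factor processes to the stably convergent sequence, and then apply the continuous map "multiply the two coordinates pointwise'' via Lemma~\ref{lem:stable_properties}(ii) — continuity here in the Skorokhod (indeed local-uniform) topology, where multiplication of a convergent function sequence by a local-uniformly convergent factor sequence is continuous at continuous limits. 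This yields
\begin{equation*}
	\big((\epsilon^{-1/2}\underleftarrow{X}_{\epsilon t})_{t\geq0},(\epsilon^{-1/2}\underrightarrow{X}_{\epsilon t})_{t\geq0}\big)\stably\big(-\tfrac{1}{p'(\overline{X})}\tilde\sigma(\overline{Y})B^{(1)},-\tfrac{1}{p'(\overline{X})}\tilde\sigma(\overline{Y})B^{(2)}\big).
\end{equation*}
Finally I would identify the limit: $\tilde\sigma=(\sigma p')\circ p^{-1}$, so $\tilde\sigma(\overline{Y})=\tilde\sigma(p(\overline{X}))=\sigma(\overline{X})p'(\overline{X})$, and the factor $p'(\overline{X})$ cancels, leaving exactly $\big(-\sigma(\overline{X})B^{(1)},-\sigma(\overline{X})B^{(2)}\big)$, which is the claim of Theorem~\ref{thm:zoom_sup_diffusion} for $X$.

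The main obstacle, and the step deserving care, is the topological one: justifying that the pointwise product map is continuous at the relevant limit points in $D[0,\infty)$ (equivalently, working on each $D[0,T]$), so that Lemma~\ref{lem:stable_properties} applies. The point is that although Skorokhod-multiplication is not jointly continuous in general, it is continuous at pairs $(f,g)$ with $g$ continuous, and here the factor sequence converges to the constant function $1/p'(\overline X)$, which is continuous; a clean way to package this is to note the factor processes converge locally uniformly (hence in Skorokhod) to a constant, and local-uniform perturbation of the scaling is harmless. A secondary technical point is the measurability/definedness of the zoom-in processes near the endpoints of the (random, shrinking-relative) lifetimes, but since for fixed $T$ and small enough $\epsilon$ we have $\epsilon T<m^X\wedge(1-m^X)$ a.s.\ (using $m^X\in(0,1)$ a.s., which follows from Corollary~\ref{cor:max_not_at_1} and its time-reversed analogue), the cemetery state $\dagger$ plays no role in the limit and can be ignored.
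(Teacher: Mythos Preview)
Your proposal is correct and follows essentially the same route as the paper: transform to $Y=p(X)$, apply the driftless case to $Y$, linearize $p$ near $\overline X$, and push the stable convergence through via Lemma~\ref{lem:stable_properties}. The only cosmetic difference is that the paper writes the linearization via the mean value theorem applied to $p^{-1}$, i.e.\ $\epsilon^{-1/2}\underrightarrow X_{\epsilon t}=(p^{-1})'(c_\epsilon(t))\,\epsilon^{-1/2}\underrightarrow Y_{\epsilon t}$ with $c_\epsilon(t)\to\overline Y$, whereas you expand $p$ and then divide by the factor $\rho(X_{m^X\pm\epsilon t})\to p'(\overline X)$; since $(p^{-1})'(\overline Y)=1/p'(\overline X)$ these are the same manoeuvre, and your extra care about continuity of pointwise multiplication in $D[0,\infty)$ at continuous limits is exactly the justification the paper leaves implicit.
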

\begin{proof}
	Assume that Theorem \ref{thm:zoom_sup_diffusion} holds for any diffusion process which starts at zero, has no drift and satisfies Assumption \ref{as:mu_sigma}.

	We consider the transformation $Y:=p(X)$ introduced above. In addition to solving the SDE \eqref{eq:SDE_Y} we further note that $Y$ satisfies Assumption~\ref{as:mu_sigma}. So by our initial assumption there is the convergence
	\begin{equation*}
		\left((\epsilon^{-1/2}\underleftarrow{Y}_{\epsilon t})_{t\geq0},(\epsilon^{-1/2}\underrightarrow{Y}_{\epsilon t})_{t\geq0}\right)\stably(-\tilde\sigma(\overline{Y})B^{(1)},-\tilde\sigma(\overline{Y})B^{(2)}),
	\end{equation*}
	where $\underleftarrow{Y}$ and $\underrightarrow{Y}$ are pre- and post-supremum processes defined for the interval $[0,1]$. Using the mean value theorem we find that
	\begin{equation*}
		\epsilon^{-1/2}\underrightarrow{X}_{\epsilon t}=\epsilon^{-1/2}(p^{-1})'(c_\epsilon(t))\underrightarrow{Y}_{\epsilon t},
	\end{equation*}
	where $c_\epsilon(t)$ is between $Y_{m^X+\epsilon t}$ and $\overline{Y}$. One easily verifies that $(p^{-1})'(c_\epsilon(\cdot))$ converges (in the Skorokhod topology) to the constant function $(p^{-1})'(\overline{Y})$. Hence,
	\begin{equation*}
		(\epsilon^{-1/2}(p^{-1})'(c_\epsilon(t))\underrightarrow{Y}_{\epsilon t})_{t\geq0}\stably-(p^{-1})'(\overline{Y})\tilde\sigma(\overline{Y})B^{(2)}=-\sigma(\overline{X})B^{(2)},
	\end{equation*}
	where the final identity comes from the definition of $\tilde\sigma$. Obviously we can do similar calculations for the pre-supremum process. Hence,
	\begin{equation*}
		\left((\epsilon^{-1/2}\underleftarrow{X}_{\epsilon t})_{t\geq0},(\epsilon^{-1/2}\underrightarrow{X}_{\epsilon t})_{t\geq0}\right)\stably(-\sigma(\overline{X})B^{(1)},-\sigma(\overline{X})B^{(2)}).
		\qedhere
	\end{equation*}
\end{proof}

For the rest of this subsection we assume that $x_0=0$ and $\mu\equiv0$. Then, as in \S\ref{subsec:representation}, we can write $X_t=\tilde W_{[X]_t}$ where $\tilde W$ is a standard Brownian motion and $[X]$ is the quadratic variation of $X$. To proceed we need the following result about zooming in at the supremum of $\tilde W$, defined for the stochastic interval $[0,[X]_1]$. This result is essentially a direct consequence of \cite[Cor.~2]{iva_zooming} except for one technical complication. That paper works only on the canonical path space and since stable convergence is not only concerned with laws but also very much with the probability space the result does not apply directly. Instead we provide a short proof which fixes this problem.

\begin{lemma}\label{lem:zooming_BM}
	It holds that
	\begin{equation}\label{eq:zooming_BM}
		\left((\epsilon^{-1/2}\underleftarrow{\tilde W}_{\epsilon t})_{t\geq0},(\epsilon^{-1/2}\underrightarrow{\tilde W}_{\epsilon t})_{t\geq0}\right)\stably(-B^{(1)},-B^{(2)}),
	\end{equation}
	where $\underleftarrow{\tilde W}$ and $\underrightarrow{\tilde W}$ are the pre- and post-supremum processes defined for the interval $[0,[X]_1]$.
\end{lemma}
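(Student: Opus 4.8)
\emph{Proof strategy.}
The plan is to deduce \eqref{eq:zooming_BM} from the Brownian zooming-in-at-the-supremum result \cite[Cor.~2]{iva_zooming}, while repairing the two mismatches that prevent citing it directly: that result lives on the canonical space of Brownian paths, so it yields neither a statement about our probability space nor a limit independent of $\ff$, and the horizon here, $[X]_1$, is random rather than deterministic. Write $Y^{(\epsilon)}$ for the left-hand side of \eqref{eq:zooming_BM}.

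The key structural remark is that $Y^{(\epsilon)}$ is a measurable functional of $\tilde W$ stopped at $[X]_1$ together with $[X]_1$ itself, hence --- because $\tilde W_s=X_{\tau_s}$ for $s\leq[X]_1$ and $[X]_1=\int_0^1\sigma^2(X_u)\idd u$ --- a functional of the path $X$; in particular $Y^{(\epsilon)}$ is $\sigma(X)$-measurable. Since $B^{(1)},B^{(2)}$ are independent of $\ff\supseteq\sigma(X)$, Lemma~\ref{lem:H-stable_to_F-stable} (with $\H=\sigma(X)$) reduces \eqref{eq:zooming_BM} to $\sigma(X)$-stable convergence, which by Lemma~\ref{lem:stable_definitions} is equivalent to the plain joint convergence in distribution $(Y^{(\epsilon)},X)\convd(-B^{(1)},-B^{(2)},X)$ with the Bessel processes independent of $X$. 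This is the ``short fix'': it replaces abstract stable convergence on $(\Omega,\ff,\p)$ by a statement about joint laws, which the canonical-space result can speak to once we re-express $X=(\tilde W_{[X]_t})_{t\geq0}$.

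For a \emph{deterministic} horizon $c>0$, Brownian scaling shows that zooming in at the supremum over $[0,c]$ is the same as zooming in at the supremum over $[0,1]$ with $\epsilon$ replaced by $\epsilon/c$; hence, jointly with and independently of the whole Brownian path, the zoomed pre- and post-supremum processes of a Brownian motion over $[0,c]$ converge to $(-B^{(1)},-B^{(2)})$ --- this is exactly \cite[Cor.~2]{iva_zooming}. To pass to the random horizon $[X]_1$ I would approximate it from above by $T_N$ taking finitely many values on a fine deterministic grid, $[X]_1\leq T_N\leq[X]_1+1/N$. On the event $\{\sup_{s\in[[X]_1,T_N]}\tilde W_s<\overline{\tilde W}\}$ --- whose probability tends to $1$ as $N\to\infty$, since $\tilde W_{[X]_1}=X_1<\overline{X}$ almost surely by Corollary~\ref{cor:max_not_at_1} --- the supremum of $\tilde W$ over $[0,T_N]$, its location, and the entire pre-supremum process coincide with those over $[0,[X]_1]$, and the corresponding zoomed post-supremum processes agree on every fixed compact time set for all small $\epsilon$. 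Decomposing over the finitely many values of $T_N$, applying the deterministic-horizon convergence on each piece, assembling the pieces with Lemma~\ref{lem:stable_properties}, and finally letting $\epsilon\downarrow0$ and then $N\to\infty$ produces the desired joint convergence, hence \eqref{eq:zooming_BM}.

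I expect the genuine obstacle to be this last interchange of the random horizon with the zooming-in limit: one must verify that shrinking the horizon from $T_N$ to $[X]_1$ leaves the pre- and post-supremum processes unchanged inside the $\epsilon$-window around the time of the supremum. This is precisely where Corollary~\ref{cor:max_not_at_1} enters --- it guarantees that the time of the supremum is an interior point of $[0,[X]_1]$, so that for $\epsilon$ small the window stays strictly inside $[0,\min(T_N,[X]_1)]$ --- and once it is settled the remainder is routine bookkeeping with the stable-convergence lemmas of \S\ref{sec:def_and_prereq}.
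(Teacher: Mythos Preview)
Your proposal is correct and follows essentially the same two-step strategy as the paper: use Lemma~\ref{lem:H-stable_to_F-stable} to repair the canonical-space issue, and use Corollary~\ref{cor:max_not_at_1} to pass from a deterministic to the random horizon $[X]_1$. The organization differs slightly. The paper takes $\H=\sigma(\tilde W)$ rather than $\sigma(X)$ and upgrades the deterministic-$T$ result from $\sigma(\tilde W)$-stable to $\ff$-stable \emph{before} extending to the random horizon (citing the argument of \cite[Cor.~2]{iva_zooming} for the latter). This ordering is a bit cleaner: once $\ff$-stable convergence holds for every fixed $T$, the random time $[X]_1$ is automatically compatible because it is $\ff$-measurable, so no further measurability check is needed. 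Your route---reduce to $\sigma(X)$-stable first, then verify the joint law $(Y^{(\epsilon)},X)\convd(-B^{(1)},-B^{(2)},X)$ via the discrete approximation $T_N$---works too, but at the decomposition step you implicitly use that $X$ and $\{T_N=c_k\}$ are $\sigma(\tilde W)$-measurable (so that the canonical result, which is joint with $\tilde W$, transfers). This is true in the driftless case via $\tau_s=\int_0^s\sigma^{-2}(\tilde W_u)\idd u$, but you should make it explicit; the paper's ordering sidesteps the point entirely. Your explicit description of the $T_N$-approximation is more detailed than the paper's, which simply defers to the proof of \cite[Cor.~2]{iva_zooming}.
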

\begin{proof}
	For each $T>0$ we let $\underleftarrow{\tilde W}^{(T)}$ and $\underrightarrow{\tilde W}^{(T)}$ denote the pre- and post-supremum processes for $\tilde W$, defined for the interval $[0,T]$. According to \cite[Thm.~4]{iva_zooming} there is the stable convergence
	\begin{equation*}
		\left((\epsilon^{-1/2}\underleftarrow{\tilde W}^{(T)}_{\epsilon t})_{t\geq0},(\epsilon^{-1/2}\underrightarrow{\tilde W}^{(T)}_{\epsilon t})_{t\geq0}\right)\overset{\H-st}{\rightarrow}(-B^{(1)},-B^{(2)}),
	\end{equation*}
	where $\H$ is the $\sigma$-algebra generated by $\tilde W$. Since the left-hand side is obviously $\H$-measurable the $\H$-stable convergence extends to $\ff$-stable convergence by Lemma~\ref{lem:H-stable_to_F-stable}.

	At this point it remains to extend to the case $T=[X]_1$. Corollary~\ref{cor:max_not_at_1} tells us that the supremum of $\tilde W$ over the interval $[0,[X]_1]$ is almost surely attained strictly before time $[X]_1$. Using this the convergence in \eqref{eq:zooming_BM} follows via the same arguments as in the proof of \cite[Cor.~2]{iva_zooming}.
\end{proof}

Finally we are ready to prove Theorem \ref{thm:zoom_sup_diffusion} in the case with $x_0=0$ and $\mu\equiv0$. As in Lemma \ref{lem:zooming_BM} we let $\underleftarrow{\tilde W}$ and $\underrightarrow{\tilde W}$ denote the pre- and post-supremum processes for $\tilde W$ defined for the interval $[0,[X]_1]$.

Since $[X]_t=\int_0^t\sigma^2(X_s)\idd s$ it follows immediately that
\begin{equation*}
	\sigma^2_\epsilon(t):=\epsilon^{-1}([X]_{m^X+\epsilon t}-[X]_{m^X})\to t\sigma^2(\overline X)
\end{equation*}
a.s.\ for any $t\in\R$ since $\sigma$ is continuous on the range of $X$. We note that this convergence is uniform on compact sets. Hence we have the almost sure functional convergence
\begin{equation}\label{eq:timechange_diffusion}
	\left((\sigma^2_\epsilon(-t))_{t\geq0},(\sigma^2_\epsilon(t))_{t\geq0}\right)\to\left((-t\sigma^2(\overline X))_{t\geq0},(t\sigma^2(\overline X))_{t\geq0}\right),
\end{equation}
which we may add to the stable convergence in \eqref{eq:zooming_BM}. We further note that
\begin{equation*}
	\epsilon^{-1/2}\underrightarrow{X}_{\epsilon t}=\epsilon^{-1/2}(X_{m^X+\epsilon t}-\overline{X})=\epsilon^{-1/2}(\tilde W_{[X]_{m^X+\epsilon t}}-\tilde W_{[X]_{m^X}})=\epsilon^{-1/2}\underrightarrow{\tilde W}_{\epsilon\sigma^2_\epsilon(t)}
\end{equation*}
for each $t\geq0$. Similarly, it holds that $\epsilon^{-1/2}\underleftarrow{X}_{\epsilon t}=\epsilon^{-1/2}\underleftarrow{\tilde W}_{-\epsilon\sigma^2_\epsilon(-t)}$ for all $t\geq0$. By continuity of subordination (see \cite[Thm.~13.2.2]{whitt}) we have the convergence 
\begin{equation*}
	\left((\epsilon^{-1/2}\underleftarrow{X}_{\epsilon t})_{t\geq0},(\epsilon^{-1/2}\underrightarrow{X}_{\epsilon t})_{t\geq0}\right)\stably\left((-B^{(1)}_{t\sigma^2(\overline X)})_{t\geq0},(-B^{(2)}_{t\sigma^2(\overline X)})_{t\geq0}\right)=\left(-\sigma(\overline X)\tilde B^{(1)},-\sigma(\overline X)\tilde B^{(2)}\right),
\end{equation*}
where $\tilde B^{(i)}_t:=\sigma^{-1}(\overline X)B^{(i)}_{t\sigma^2(\overline X)}$. We note that $(\tilde B^{(1)},\tilde B^{(2)})$ is again a pair of Bessel-3 processes, independent of $\ff$ and of each other. This concludes the proof of Theorem \ref{thm:zoom_sup_diffusion}.

\section*{Acknowledgements}
I am thankful to my supervisor Jevgenijs Ivanovs for providing valuable feedback and for guiding me towards relevant literature.

Furthermore I gratefully acknowledge financial support of Sapere Aude Starting Grant 8049-00021B ``Distributional Robustness in Assessment of Extreme Risk'' from Independent Research Fund Denmark.

\printbibliography

\end{document}